\newtheorem{theo}{Theorem}
\newtheorem{coro}{Corollary}
\newtheorem{lemm}{Lemma}
\theoremstyle{remark}
\newtheorem{rema}{\bf Remark}
\newtheorem{example}{\bf Example}
\begin{document}

\title{Quadrangular ${\mathbb Z}_{p}^{l}$-actions on Riemann surfaces}
\author{Ruben A. Hidalgo}

\subjclass[2010]{30F10, 14H37, 14H30, 14H40}
\keywords{Riemann surface; Automorphisms; Algebraic curves; Jacobian variety}

\address{Departamento de Matem\'atica y Estad\'{\i}stica, Universidad de La Frontera. Temuco, Chile}
\email{ruben.hidalgo@ufrontera.cl}
\thanks{Partially supported by Projects Fondecyt 1190001 and 1220261}

\begin{abstract}
Let $p \geq 3$ be a prime integer and, for $l \geq 1$, let $G \cong {\mathbb Z}_{p}^{l}$ be a group of conformal automorphisms of some closed Riemann surface $S$ of genus $g \geq 2$. By the Riemann-Hurwitz formula, either $p \leq g+1$ or $p=2g+1$. 
If $l=1$ and $p=2g+1$, then $S/G$ is the sphere with exactly three cone points and, if moreover $p \geq 7$, then $G$ is the unique $p$-Sylow subgroup of ${\rm Aut}(S)$. If $l=1$ and $p=g+1$, then $S/G$ is the sphere with exactly four cone points and, if moreover $p \geq 13$, then $G$ is again the unique $p$-Sylow subgroup. The above unique facts permited many authors to obtain algebraic models and the corresponding groups ${\rm Aut}(S)$ in these situations.
Now, let us assume $l \geq 2$. If $p \geq 5$, then either (i) $p^{l} \leq g-1$ or (ii) $S/G$ has genus zero, $p^{l-1}(p-3) \leq 2(g-1)$ and $2 \leq l \leq r-1$, where $r \geq 3$ is the number of cone points of $S/G$. Let us assume we are in case (ii).  If $r=3$, then $l=2$ and $S$ happens to be the classical Fermat curve of degree $p$, whose group of automorphisms is well known. The next case, $r=4$, is studied in this paper. We provide an algebraic curve representation for $S$, a description of its group of conformal automorphisms, a discussion of its field of moduli and an isogenous decomposition of its jacobian variety.
\end{abstract}

\maketitle

\section{Introduction}
If $S$ is a closed Riemann surface of genus $g \geq 2$, then Schwarz \cite{Schwarz} proved that ${\rm Aut}(S)$, the group of 
conformal automorphisms of $S$, is finite and later Hurwitz \cite{Hurwitz} obtained the upper bound  $|{\rm Aut}(S)| \leq 84(g-1)$.  Moreover, if $G$ is a finite group, then Greenberg \cite{Greenberg} proved the existence of a closed Riemann surface $S$, of some genus $g \geq 2$, such that $G<{\rm Aut}(S)$. The 
moduli space ${\mathfrak M}_{g}$, of biholomorphism classes of closed Riemann surfaces of genus $g \geq 2$, is a complex orbifold of dimension $3(g-1)$ \cite{Nag}. Its 
branch locus ${\mathcal B}_{g} \subset {\mathcal M}_{g}$ consists of those (classes of) Riemann surfaces with non-trivial groups of automorphisms. 
If $g \geq 4$, then ${\mathcal B}_{g}$ coincides with the locus where ${\mathfrak M}_{g}$ fails to be a topological manifold.

As a consequence of the Riemann-Roch theorem, in one direction, and the implicit function theorem in the other, there is an equivalence between the category of closed Riemann surfaces (up to biholomorphisms) and that of irreducible complex algebraic curves (up to birationality). In such an equivalence, conformal maps between closed Riemann surfaces corresponds to rational maps between the curves. Unfortunately, in the general case, it is a difficult problem to provide an algebraic curve for $S$ and the correspoding algebraic presentation of $G$. Only some particular situations have been worked out in the literature (for instance, hyperelliptic Riemann surfaces, cyclic $p$-gonal curves, Hurwitz curves, Fermat curves, generalized Fermat curves).  

Let $S$ be a closed Riemann surface of genus $g \geq 2$ and let $G<{\rm Aut}(S)$ be such that $S/G$ has genus zero (so, by the uniformization theorem, it can be identified with the Riemann sphere $\widehat{\mathbb C}$) and it has $r \geq 3$ conical points; we  will say for short that $G$ is of type $r$. 
In this case, up to a M\"obius transformation, we may assume that the conical set is
${\mathcal B}:=\{\infty, 0, 1, \lambda_{1},\ldots,\lambda_{r-3}\} \subset \widehat{\mathbb C}$. Let $M_{\mathcal B}$ be the ${\rm PSL}_{2}({\mathbb C})$-stabilizer of ${\mathcal B}$, which is a finite group (the finite subgroups of ${\rm PSL}_{2}({\mathbb C})$ are either trivial, cyclic groups, dihedral groups or one of the Platonic symmetry groups ${\mathcal A}_{4}$, ${\mathcal A}_{5}$ or ${\mathfrak S}_{4}$). If ${\rm Aut}_{G}(S)$ denotes the normalizer of $G$  in ${\rm Aut}(S)$, then  there is a short exact sequence 
$$1 \to G \to {\rm Aut}_{G}(S) \stackrel{\theta}{\to} L \to 1,$$ where $L$ is a suitable subgroup of $M_{\mathcal B}$.

If $G$ is a $p$-group of type $r \geq 3$, then the Riemann-Hurwitz formula asserts that either $p \leq g+1$ or $p=2g+1$.
In \cite{H1}, it was seen that, for $p \geq 5r-7$, the group $G$ is the unique $p$-Sylow subgroup of ${\rm Aut}(S)$ (in particular, the unique $p$-subgroup of type $r$). Also, if $G \cong {\mathbb Z}_{p}$, then Castelnuovo-Severi's inequality \cite{Severi} also asserts uniqueness for  $p<r/2$ and, for $r=4$, the uniqueness holds for $p \geq 7$ \cite{HL}. 
The uniqueness property of $G$ asserts the existence of 
a natural holomorphic embedding of the modul space ${\mathfrak M}_{0,r}$, of the $r$-marked sphere, into the moduli space ${\mathfrak M}_{g}$.

Let us now restrict to the abelian case $G \cong {\mathbb Z}_{p}^{l}$, where $l \geq 1$, of type $r \geq 3$ (so, necessarily, $l \leq r-1$).
By the Riemann-Hurwitz formula, 
$2 \leq g=1+p^{l-1}((p-1)(r-2)-2)/2,$
 in particular, $(p-1)(r-2)>2$.

\subsection*{1}
If $l=1$, then $G \cong {\mathbb Z}_{p}$. In \cite{Gabino}, Gonz\'alez-Diez observed that $G$ is unique up to conjugation in ${\rm Aut}(S)$. As a consequence of the above mentioned results in \cite{Severi,H1,HL}, the uniqueness for $G$ of type $r \geq 3$ holds for either: (i) $r=3$ and $p \geq 11$,  (ii) $r=4$ and $p \geq 7$ 
and (iii) $r \geq 5$ and either $ p \geq 5r-7$ or  $p<r/2$. An equation for $S$ is well known to be of the form \cite{Wootton}
$$(*)\quad y^{p}=x(x-1)^{m}\prod_{j=1}^{r-3}(x-\lambda_{j})^{m_{j}},$$ 
where $m, m_{j} \in \{1,\ldots,p-1\}$ are such that $1+m+m_{1}+\cdots+m_{r-3} \nequiv 0 \mod p$, and a generator for $G$ is given by $a(x,y)=(x,e^{2 \pi i/p} y)$. Under the above uniqueness property of $G$, it is not difficult to search for algebraic equations for the automorphisms of $S$: 
For each $T \in M_{\mathcal B}$ one looks for some rational map $T_{1}(x,y)=v$ such that, for $u=T(x)$, it holds that $(u,v)$ satisfies the above algebraic equation $(*)$. If so, then 
$\widehat{T}(x,y)=(T(x),T_{1}(x,y))$ provides an automorphism (and all automorphisms of $S$ are so obtained).
In the recent paper \cite{RR} (see also, \cite{IJR}), the authors have considered the case $r=4$ and $p \geq 7$, where they provide an algebraic description, a realization of the groups of conformal automorphisms, the computation of their fields of moduli and they also describe isogenous decompositions of their Jacobian varieties.

\subsection*{2}
If $l=r-1$, then $G \cong {\mathbb Z}_{p}^{r-1}$. Set $\Lambda=(\lambda_{1},\ldots,\lambda_{r-3})$. In \cite{GHL} (see Section \ref{Sec:GFC}), it was observed that $S$ can be described by a an irreducible and smooth projective algebraic curve  $C_{\Lambda} \subset {\mathbb P}^{n}$, this being a certain fiber product of $r-2$ classical Fermat curves of degree $p$, 
and, in such a model, the group $G$ coincides with $H=\langle a_{1},\ldots,a_{r-1}\rangle$, where 
$$a_{j}([x_{1}:\ldots:x_{r}])=[x_{1}:\ldots:x_{j-1}:e^{2 \pi i/p} x_{j}: x_{j+1},\ldots:x_{r}], \; j=1,\ldots, r-1.$$

The group $H$ (respectively, the curve $C_{\Lambda}$) is called a generalized Fermat group (respectively, a generalized Fermat curve) of type $(p,r-1)$. 
If $r=3$ (so $p \geq 5$ and ${\mathcal B}=\{\infty,0,1\}$), then $C_{\Lambda}$ is the classical Fermat curve of degree $p$, $F_{p}:=\{x_{1}^{p}+x_{2}^{p}+x_{3}^{p}=0\} \subset {\mathbb P}^{2}$, whose group of automorphisms is isomorphic to ${\mathbb Z}_{p}^{2} \rtimes {\mathfrak S}_{3}$ (the ${\mathbb Z}_{p}^{2}$ factor corresponds to $H$ and the symmetric group factor acts by permutation of the coordinates). 
In \cite{HKLP} (see also \cite{FGHL}), it was proved that, for $(p-1)(r-2)>2$, the generalized Fermat group $H$ is unique in ${\rm Aut}(C_{\Lambda})$ (this result holds for $p$ not necessarily a prime integer) and that ${\rm Aut}(C_{\Lambda})$ consists, in such a model, by linear projective transformations. In \cite{GHL}, it was observed that $C_{\Lambda}$ is uniformized by the derived subgroup of a Fuchsian group uniformizing the orbifold $C_{\Lambda}/H$. This last fact, together the uniqueness of $H$, asserts that $L=M_{\mathcal B}$ and $|{\rm Aut}(C_{\Lambda})|=p^{r}|M_{\mathcal B}|$. 
In \cite{CHQ}, it was described an explicit isogenous decomposition of the jacobian variety $JC_{\Lambda}$, in terms of the above cone points. It says that $JC_{\Lambda} \cong_{isog} \prod_{Z} JS_{Z}$, where $Z$ runs over all subgroups of $H$ isomorphic to ${\mathbb Z}_{p}^{r-1}$ and $S_{Z}$ is the underlying Riemann surface structure of the orbifold $C_{\Lambda}/Z$ (whose equations are explicitly given).

\subsubsection*{\bf 3}
If $2 \leq l \leq r-2$, then there is some subgroup $K \cong {\mathbb Z}_{p}^{r-1-l}$ of $H \cong {\mathbb Z}_{p}^{r-1}$ (as described in 2. above), acting freely on $C_{\Lambda}$, such that $S=C_{\Lambda}/K$ and $G=H/K$ (see Theorem \ref{teo1}).  In particular, if we assume $p \geq 5r-7$, then ${\rm Aut}(S)=\widehat{G}/K$, where $\widehat{G}$ is the normalizer of $K$ in ${\rm Aut}(C_{\Lambda})$. Once the group $K$ is given, one may apply classical GIT theory to obtain (affine) algebraic equations for $S$. An isogenous decompositions for $JS$ should be obtained from the previously decomposition obtained for $JC_{\Lambda}$.

\medskip

The aim of this paper is to make a precise description of the above for $r=4$. In this case, $l \in \{1,2,3\}$, $p \geq 3$, $\Lambda=\lambda \in {\mathbb C} \setminus \{0,1\}$, ${\mathcal B}=\{\infty,0,1,\lambda\}$, $S=C_{\Lambda}/K$ and $G=H/K \cong {\mathbb Z}_{p}^{l}$, where $K \cong {\mathbb Z}_{p}^{3-l}$ is a subgroup of $H$ acting freely on $C_{\Lambda}$. 
The case $l=3$ corresponds to $S=C_{\Lambda}$, the group $G=H$ is unique; this case is discussed in Section \ref{Sec:(p,3)}. The well known situation $l=1$ (where the uniqueness of $G$ holds for $p \geq 7$) is recalled in Section \ref{Sec:l=1} as a matter of completeness. The case $l=2$ (where the uniqueness of $G$ holds if $p \geq 13$) is discussed in Section \ref{Sec:caso:l=2}.
 We provide an explicit description of those subgroups $K \cong {\mathbb Z}_{p}$, the corresponding Riemann surfaces $S=C_{\Lambda}/K$ and their groups of automorphisms (Theorem \ref{teol=2}) and 
an isogenous decomposition of the jacobian variety $JS$ (Theorem \ref{isogeno}). This decomposition asserts that $JS \cong_{isog} \prod_{Z} JS_{Z}$,
where $Z$ runs over all subgroups of $G \cong {\mathbb Z}_{p}^{2}$ isomorphic to ${\mathbb Z}_{p}$ and $S_{Z}$ is the underlying Riemann surface structure of the orbifold $S/Z$ (whose equations are explicitly given).

\section{Some generalities on generalized Fermat curves of type $(p,n)$}\label{Sec:GFC}
Let us fix integers $n,p \geq 2$.
A closed Riemann surface $S$ is called a generalized Fermat Riemann surface of type $(p,n)$ if it admits a group $G \cong {\mathbb Z}_{p}^{n}$ of conformal automorphisms such that the quotient orbifold $S/G$ has genus zero and axactly $n+1$ cone points (each one of cone order $p$); we also say that $G$ (respectively, $(S,G)$) is a generalized Fermat group (respectively, generalized Fermat pair) of type $(p,n)$. In this section, we recall some of the properies of such objects which can be found, for instance, in  \cite{GHL,HKLP}. As the case $n=2$ corresponds to classical Fermat curves of degree $p$, we will assume $n \geq 3$.

\subsection{Generalized Fermat curves of type $(p,n)$, where $n \geq 3$, and its generalized Fermat group}
Let us now assume $n \geq 3$. Set  $\omega_{p}=e^{2 \pi i/p}$ and $\Omega_{n} \subset {\mathbb C}^{n-2}$ be the domain consisting of the tuples $\Lambda=(\lambda_{1},\ldots,\lambda_{n-2})$ such that $\lambda_{j} \neq 0,1$ and $\lambda_{i} \neq \lambda_{j}$ for $i \neq j$.
Each tuple $\Lambda=(\lambda_{1},\ldots,\lambda_{n-2}) \in \Omega_{n}$ defines the algebraic curve 
$$C_{\Lambda}:=\left\{ \begin{array}{c}
x_{1}^{p}+x_{2}^{p}+x_{3}^{p}=0\\
\lambda_{1} x_{1}^{p}+x_{2}^{p}+x_{4}^{p}=0\\
\vdots\\
\lambda_{n-2} x_{1}^{p}+x_{2}^{p}+x_{n+1}^{p}=0
\end{array}
\right\} \subset {\mathbb P}^{n}.
$$

It can be checked that $C_{\Lambda}$ is an smooth irreducible algebraic curve, so it defines a closed Riemann surface.
Each of the following $n+1$ linear projective transformations 
$$
\begin{array}{c}
a_{j}([x_{1}:\cdots:x_{n+1}])=[x_{1}:\cdots:x_{j-1}:\omega_{p}x_{j}:x_{j+1}:\cdots:x_{n+1}],\; j=1,\ldots,n+1,
\end{array}
$$
defines conformal automorphisms of $C_{\Lambda}$ satisfying that (i) $a_{1}\cdots a_{n+1}=1$ and (ii) $H=\langle a_{1},\ldots,a_{n}\rangle \cong {\mathbb Z}_{p}^{n}$.
Moreover, the map 
$$\pi:C_{\Lambda} \to \widehat{\mathbb C}: [x_{1}:\cdots:x_{n+1}] \mapsto -\left(\frac{x_{2}}{x_{1}}\right)^{p},$$
is a regular branched covering, with deck group $H$, and whose branch value set is 
$${\mathcal B}_{\Lambda}:=\{\infty, 0, 1, \lambda_{1},\ldots, \lambda_{n-2}\}.$$ 

\begin{rema}\label{Obs1}
The only elements of $H$, different from the identity, with fixed points on $C_{\lambda}$ are the powers of $a_{1}, \ldots, a_{n+1}$ (these elements are called the standard generators of $H$).
\end{rema}

It follows that $(C_{\Lambda},H)$ is a generalized Fermat pair of type $(p,n)$. The Riemann-Hurwitz formula asserts that $C_{\Lambda}$ has genus 
$$g_{C_{\Lambda}}=1+p^{n-1}((p-1)(n-1)-2)/2.$$

In particular, $g_{C_{\Lambda}}>1$ if and only if $(p-1)(n-1)>2$.

\begin{theo}[\cite{FGHL,HKLP}]\label{unico}
Let $(S,G)$ be a generalized Fermat pair of type $(p,n)$, where $n \geq 3$. Up to a M\"obius transformation we may assume the set of cone points of $S/G$ (which is identified with the Riemann sphere) is given by the set $\{\infty, 0, 1, \lambda_{1},\ldots, \lambda_{n-2}\}$. Then
\begin{enumerate}
\item there is a biholomorphism $\phi:S \to C_{\Lambda}$, where $\Lambda=(\lambda_{1},\ldots,\lambda_{n-2}) \in \Omega_{n}$, such that $\phi G \phi^{-1}=H$, and
\item if $(p-1)(n-1)>2$, then $G$ is the unique generalized Fermat group of type $(p,n)$ of $S$. 
\end{enumerate}
\end{theo}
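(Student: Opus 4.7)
\emph{Part (1).} The approach is via orbifold uniformization. After the M\"obius normalization, $S/G$ is an orbifold of signature $(0;\underbrace{p,\ldots,p}_{n+1})$ with cone points at $\{\infty,0,1,\lambda_1,\ldots,\lambda_{n-2}\}$. Let $\Gamma$ be a Fuchsian group uniformizing it, with the standard presentation
\[
\Gamma=\langle x_1,\ldots,x_{n+1}\mid x_1^p=\cdots=x_{n+1}^p=x_1\cdots x_{n+1}=1\rangle,
\]
so $S\cong\mathbb{H}/K$ for a normal subgroup $K\triangleleft\Gamma$ with $\Gamma/K\cong G\cong\mathbb{Z}_p^n$. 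A direct computation yields $\Gamma^{\mathrm{ab}}\cong\mathbb{Z}_p^n$ (the product relation kills one of the $n+1$ order-$p$ abelian generators), and since the surjection $\Gamma\twoheadrightarrow G$ has abelian image of the same order as $\Gamma^{\mathrm{ab}}$, it must factor as $\Gamma\twoheadrightarrow\Gamma^{\mathrm{ab}}\xrightarrow{\cong}G$. In particular $K=[\Gamma,\Gamma]$ depends only on $\Lambda$. Running the identical analysis on $(C_\Lambda,H)$, whose quotient orbifold $C_\Lambda/H$ (via the regular cover $\pi$) has exactly the same signature and branch set, identifies $C_\Lambda$ with $\mathbb{H}/[\Gamma,\Gamma]$ as well; the canonical identification of these two quotients supplies a biholomorphism $\phi:S\to C_\Lambda$ intertwining the two realizations of $\Gamma/[\Gamma,\Gamma]$, hence satisfying $\phi G\phi^{-1}=H$.

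\emph{Part (2).} Assume $(p-1)(n-1)>2$, so $g_S\ge 2$ and $\mathrm{Aut}(S)$ is finite. Suppose $G,G'\le\mathrm{Aut}(S)$ are both generalized Fermat groups of type $(p,n)$. By Remark~\ref{Obs1}, applied via part~(1) to each, the non-identity elements of $G$ (respectively $G'$) with fixed points on $S$ are exactly the powers of the $n+1$ standard generators, and each such cyclic factor contributes $p^{n-1}$ fixed points forming a single orbit. If $G\ne G'$, some cyclic factor of $G'$ lies outside $G$, contributing additional ramification under the joint action of $\langle G,G'\rangle\le\mathrm{Aut}(S)$; combining the Riemann--Hurwitz formula for $S\to S/\langle G,G'\rangle$ with Hurwitz's bound $|\mathrm{Aut}(S)|\le 84(g_S-1)$ produces a contradiction, forcing $G=G'$.

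\emph{Main obstacle.} The delicate step is part~(2). Part~(1) reduces the question to classifying Fuchsian overgroups of $\pi_1(S)=[\Gamma,\Gamma]$ of signature $(0;p,\ldots,p)$, and excluding a ``second'' such overgroup requires the careful fixed-point bookkeeping that is the content of \cite{FGHL,HKLP}. The hypothesis $(p-1)(n-1)>2$ is essential: it guarantees $g_S\ge 2$ so $\mathrm{Aut}(S)$ is finite, and it rules out the low-complexity exceptions where multiple Fermat structures could genuinely coexist.
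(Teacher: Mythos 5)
Your part (1) is essentially correct and follows the route the paper itself relies on: since $\Gamma^{\mathrm{ab}}\cong\mathbb{Z}_p^{n+1}/\langle(1,\ldots,1)\rangle\cong\mathbb{Z}_p^{n}$, any surjection $\Gamma\to\mathbb{Z}_p^{n}$ has kernel exactly $\Gamma'$, so both $S$ and $C_\Lambda$ (whose quotient orbifolds coincide after the M\"obius normalization) are identified with ${\mathbb H}^2/\Gamma'$ carrying $\Gamma/\Gamma'$. This is precisely Theorem~\ref{derivado} from \cite{GHL}, which the paper quotes; note that the paper gives no proof of Theorem~\ref{unico} at all, importing it from \cite{FGHL,HKLP}, so there is no internal argument to compare against for part (2).

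Part (2) is where your proposal has a genuine gap, and the mechanism you propose would not close it. The combination ``Riemann--Hurwitz for $S\to S/\langle G,G'\rangle$ plus Hurwitz's bound $84(g-1)$'' does not produce a contradiction: for $n=3$ one has $g-1=p^{2}(p-2)$, so $84(g-1)=84p^{2}(p-2)$, while two distinct copies of $\mathbb{Z}_p^{3}$ only force $|\langle G,G'\rangle|\geq p^{4}$, and $p^{4}\leq 84p^{2}(p-2)$ for every prime $3\leq p\leq 81$. So the order count is perfectly consistent with a second generalized Fermat group, and Riemann--Hurwitz for the larger quotient merely constrains its signature without excluding anything. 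Likewise, ``additional ramification'' from a cyclic factor of $G'$ outside $G$ is not by itself contradictory, since elements outside $G$ are allowed to have fixed points. The uniqueness statement is the hard content of \cite{FGHL} ($n=3$) and \cite{HKLP} (general $n$, $p$ not necessarily prime), proved there by a substantially more delicate analysis of the fixed-point data and of the possible overgroups of $G$ in ${\rm Aut}(S)$; your ``main obstacle'' paragraph effectively concedes that you are appealing to those papers rather than proving the claim. As written, part (2) is not a proof.
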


The above result asserts that we may restrict  to work with the generalized Fermat pairs $(C_{\Lambda},H)$ as defined above.

\subsection{Fuchsian uniformization}\label{Sec:uniform}
Let $(C_{\Lambda},H)$ be a generalized Fermat pair of type $(p,n)$, where $(p-1)(n-1)>2$, and 
let $\Gamma<{\rm PSL}_{2}({\mathbb R})$ be a Fuchsian group such that ${\mathbb H}^{2}/\Gamma=C_{\Lambda}/H$. A presentation of $\Gamma$ is given as follows
$$\Gamma=\langle \delta_{1},\ldots,\delta_{n+1}: \delta_{1}^{p}=\cdots=\delta_{n+1}^{p}=\delta_{1}\delta_{2}\cdots \delta_{n+1}=1\rangle.$$

If $\Gamma'$ denotes the derived subgroup of $\Gamma$, then $\Gamma/\Gamma' \cong {\mathbb Z}_{p}^{n}$, $\Gamma'$ is torsion free and ${\mathbb H}^{2}/\Gamma'$ is a closed Riemann surface. In particular, $({\mathbb H}^{2}/\Gamma', \Gamma/\Gamma')$ is a generalized Fermat pair of type $(p,n)$. 

\begin{theo}[\cite{GHL}]\label{derivado}
There is a biholomorphism $\phi:C_{\Lambda} \to {\mathbb H}^{2}/\Gamma'$ such that $\phi H \phi^{-1}=\Gamma/\Gamma'$.
\end{theo}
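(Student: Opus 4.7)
The plan is to recognize both $\pi: C_{\Lambda} \to C_{\Lambda}/H$ and the natural orbifold quotient $\mathbb{H}^{2}/\Gamma' \to \mathbb{H}^{2}/\Gamma$ as the \emph{same} regular covering of the common base $\mathbb{H}^{2}/\Gamma = C_{\Lambda}/H$, and then read off the desired biholomorphism.

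First I would abelianize the given presentation of $\Gamma$. Writing $\bar\delta_{i}$ for the image of $\delta_{i}$ in $\Gamma/\Gamma'$, the relations reduce to $p\bar\delta_{i}=0$ for $1 \leq i \leq n+1$ together with $\bar\delta_{1}+\cdots+\bar\delta_{n+1}=0$. Solving the last relation for $\bar\delta_{n+1}$ yields $\Gamma/\Gamma' \cong {\mathbb Z}_{p}^{n}$, and in particular $[\Gamma:\Gamma']=p^{n}$. Every torsion element of the Fuchsian group $\Gamma$ is conjugate to some $\delta_{i}^{k}$ with $0<k<p$, and each such $\bar\delta_{i}^{k}$ is nonzero in ${\mathbb Z}_{p}^{n}$ (since each $\bar\delta_{i}$ has order exactly $p$); hence $\Gamma'$ is torsion-free and $\mathbb{H}^{2}/\Gamma'$ is a smooth closed Riemann surface.

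Under the identification $\mathbb{H}^{2}/\Gamma = C_{\Lambda}/H$, I would view $\pi$ in the orbifold category. Since $C_{\Lambda}$ is smooth and the ramification index of $\pi$ over each cone point of order $p$ is exactly $p$ (cf.\ Remark \ref{Obs1}), the orbifold Galois correspondence identifies $\pi$ with a regular cover classified by a torsion-free normal subgroup $N \triangleleft \Gamma$ with $\Gamma/N \cong H \cong {\mathbb Z}_{p}^{n}$. Because $H$ is abelian, the classifying surjection $\Gamma \to H$ factors through $\Gamma/\Gamma'$, so $N \supseteq \Gamma'$. The index count $[\Gamma:N]=p^{n}=[\Gamma:\Gamma']$ then forces $N=\Gamma'$, and the resulting isomorphism of regular covers over the common base provides a biholomorphism $\phi: C_{\Lambda} \to \mathbb{H}^{2}/\Gamma'$ intertwining the deck group actions, i.e.\ $\phi H \phi^{-1}=\Gamma/\Gamma'$.

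The main obstacle I expect is the previous step: one must carefully invoke the orbifold Galois correspondence, using that $\pi$ is everywhere $p$-ramified above cone points of order $p$ so that no residual orbifold structure appears upstairs, and choosing compatible base points so that the classifying homomorphism really is defined on $\Gamma$ itself. Once this bookkeeping is set up, the combination ``torsion-free kernel with abelian quotient of order $p^{n}$'' leaves $\Gamma'$ as the only possibility.
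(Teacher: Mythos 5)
Your argument is correct. The paper itself does not prove Theorem \ref{derivado} (it is quoted from \cite{GHL}); the surrounding text only records the ingredients you also establish first, namely that $\Gamma/\Gamma'\cong{\mathbb Z}_{p}^{n}$ and that $\Gamma'$ is torsion free, so that $({\mathbb H}^{2}/\Gamma',\Gamma/\Gamma')$ is itself a generalized Fermat pair of type $(p,n)$ over the same branch set; the route implicit in the paper is then to invoke the classification statement, Theorem \ref{unico}(1), to identify this pair with $(C_{\Lambda},H)$. You instead argue directly: the orbifold Galois correspondence assigns to $\pi:C_{\Lambda}\to C_{\Lambda}/H$ a torsion-free normal subgroup $N\lhd\Gamma$ with abelian quotient $\Gamma/N\cong H$ of order $p^{n}$, whence $\Gamma'\leq N$ and the index count forces $N=\Gamma'$, so the two regular covers of the common base are equivalent and the equivalence conjugates the deck groups. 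This is a legitimate and in fact more self-contained derivation (it needs only Remark \ref{Obs1} to see that the cone orders downstairs are exactly $p$, so that $C_{\Lambda}$ carries no residual orbifold structure and $N$ is honestly torsion free), and it is essentially the argument underlying the proof in \cite{GHL}; what the paper's implicit route buys instead is brevity, since Theorem \ref{unico} is already on the table. The only point worth making explicit in your write-up is that equivalence of the two regular covers over $\mathbb{H}^{2}/\Gamma$ is an equivalence in the holomorphic category, so that the covering isomorphism $\phi$ is indeed a biholomorphism; this is automatic here because both covers are unbranched away from the cone points and locally biholomorphic onto the base, but it deserves a sentence.
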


Since, under the assumption that $(p-1)(n-1)>2$, the generalized Fermat group $H$ in ${\rm Aut}(C_{\Lambda})$ is unique, there is 
a homomorphism $\rho:{\rm Aut}(C_{\Lambda}) \to M_{{\mathcal B}_{\Lambda}}$, where $M_{{\mathcal B}_{\Lambda}}$ is the ${\rm PSL}_{2}({\mathbb C})$-stabilizer of ${\mathcal B}_{\Lambda}$, whose kernel is $H$. As $\Gamma'$ is characteristic subgroup of $\Gamma$, the homomorphism $\theta$ is surjective, so there is a natural short exact sequence
$$1 \to H \to {\rm Aut}(C_{\Lambda}) \stackrel{\rho}{\to} M_{{\mathcal B}_{\Lambda}} \to 1.$$

\subsection{Equivalence of generalized Fermat curves}\label{Obsclases}
Above we have considered the set $\Omega_{n}$, which provides a parametrization of generelized Fermat curves of type $(p,n)$, where $(p-1)(n-1)>2$.
Let $\Lambda=(\lambda_{1},\ldots,\lambda_{n-2}), \Sigma=(\mu_{1},\ldots,\mu_{n-2}) \in \Omega_{n}$ and consider the associated generalized Fermat curves $C_{\Lambda}$ and $C_{\Sigma}$. Note that the linear group $H$ (seen as a subgroup of ${\rm PGL}_{n+1}({\mathbb C})$) restricts to both curves as the corresponding generalized Fermat group; so we still denoting them by the same letter $H$.
The uniqueness of $H$ asserts that if $\phi:C_{\Lambda} \to C_{\Sigma}$ is an isomorphism, then it descends to a M\"obius transformation $T_{\phi} \in {\rm PSL}_{2}({\mathbb C})$ sending the set ${\mathcal B}_{\Lambda}=\{\infty,0,1,\lambda_{1},\ldots,\lambda_{n-2}\}$ onto ${\mathcal B}_{\Sigma}=\{\infty,0,1,\mu_{1},\ldots,\mu_{n-2}\}$. Conversely, by Theorem \ref{derivado}, for every M\"obius transformation $T \in {\rm PSL}_{2}({\mathbb C})$ sending the set ${\mathcal B}_{\Lambda}$ onto ${\mathcal B}_{\Sigma}$ there is an isomorphism $\phi$ such that $T=T_{\phi}$. As a consequence is the following.

\begin{theo}[\cite{GHL}]\label{moduli}
If $\Lambda, \Sigma \in \Omega_{n}$ and $(p-1)(n-1)>2$, then the generalized Fermat curves (both of type $(p,n)$) $C_{\Lambda}$ and $C_{\Sigma}$ are isomorphic if and only if 
$\Lambda$ and $\Sigma$ belong to the same orbit under the group ${\mathbb G}_{n}$ of  automorphisms of $\Omega_{n}$ generated by the transformations 
$$U(\lambda_{1},\ldots,\lambda_{n-2})=\left(\frac{\lambda_{n-2}}{\lambda_{n-2}-1}, \frac{\lambda_{n-2}}{\lambda_{n-2}-\lambda_{1}}, \cdots, \frac{\lambda_{n-2}}{\lambda_{n-2}-\lambda_{n-3}} \right)$$
$$V(\lambda_{1},\ldots,\lambda_{n-2})=\left(\frac{1}{\lambda_{1}},\cdots,\frac{1}{\lambda_{n-2}}\right).$$
\end{theo}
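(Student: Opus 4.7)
The plan is to reduce the isomorphism problem to a Möbius-equivalence problem on the branch loci, and then to identify the induced action on $\Omega_{n}$ with that of the symmetric group $S_{n+1}$ on the $n+1$ marked points. First, the uniqueness part of Theorem \ref{unico} shows that any biholomorphism $\phi:C_{\Lambda}\to C_{\Sigma}$ must satisfy $\phi H\phi^{-1}=H$, because under the hypothesis $(p-1)(n-1)>2$ the group $H$ is the unique generalized Fermat group of type $(p,n)$ on each side. Hence $\phi$ descends through the quotient maps $\pi_{\Lambda},\pi_{\Sigma}$ to a Möbius transformation $T_{\phi}\in{\rm PSL}_{2}(\mathbb{C})$ with $T_{\phi}(\mathcal{B}_{\Lambda})=\mathcal{B}_{\Sigma}$. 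Conversely, any such $T$ realizes an isomorphism of the orbifolds $\mathbb{H}^{2}/\Gamma_{\Lambda}$ and $\mathbb{H}^{2}/\Gamma_{\Sigma}$, which lifts to a biholomorphism of $\mathbb{H}^{2}$ conjugating $\Gamma_{\Lambda}$ to $\Gamma_{\Sigma}$; since derived subgroups are characteristic, $\Gamma_{\Lambda}'$ is sent to $\Gamma_{\Sigma}'$, and Theorem \ref{derivado} then supplies the desired biholomorphism $C_{\Lambda}\to C_{\Sigma}$.

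It therefore suffices to describe when $\mathcal{B}_{\Lambda}$ and $\mathcal{B}_{\Sigma}$ are Möbius-equivalent. Label the points as $p_{0}=\infty,\ p_{1}=0,\ p_{2}=1$ and $p_{j+2}=\lambda_{j}$ for $j=1,\ldots,n-2$. A Möbius transformation is uniquely determined by its action on any three points, so each Möbius equivalence $T(\mathcal{B}_{\Lambda})=\mathcal{B}_{\Sigma}$ is encoded by a permutation $\sigma\in S_{n+1}$ of labels (after the unique renormalization sending the images of three chosen points back to $\infty,0,1$). This gives an $S_{n+1}$-action on $\Omega_{n}$ whose orbits are exactly the isomorphism classes of generalized Fermat curves. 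The theorem then reduces to showing that $\mathbb{G}_{n}=\langle U,V\rangle$ acts as the full $S_{n+1}$.

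The last step is a direct computation. The transformation $T_{V}(z)=1/z$ swaps $\infty\leftrightarrow 0$, fixes $1$, and sends $\lambda_{j}\mapsto 1/\lambda_{j}$, so $V$ realizes the transposition $(p_{0}\ p_{1})$. The transformation $T_{U}(z)=\lambda_{n-2}/(\lambda_{n-2}-z)$ sends $\infty\mapsto 0$, $0\mapsto 1$, $1\mapsto \lambda_{n-2}/(\lambda_{n-2}-1)$, $\lambda_{j}\mapsto \lambda_{n-2}/(\lambda_{n-2}-\lambda_{j})$ for $1\le j\le n-3$, and $\lambda_{n-2}\mapsto\infty$; after renormalization this produces the tuple $U(\Lambda)$ and realizes the $(n+1)$-cycle $(p_{0}\ p_{1}\ p_{2}\ \cdots\ p_{n})$. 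Since a transposition $(0\ 1)$ together with the $(n+1)$-cycle $(0\ 1\ 2\ \cdots\ n)$ classically generate $S_{n+1}$ (conjugating the transposition by powers of the cycle yields all adjacent transpositions), we conclude $\mathbb{G}_{n}=S_{n+1}$ and the theorem follows.

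The only genuine obstacle is careful bookkeeping of the renormalization step that restores $(\infty,0,1,\ldots)$ as the head of the tuple after applying a Möbius transformation, and verifying that $U,V$ indeed preserve $\Omega_{n}$; once this is organized, the argument is the purely combinatorial identification of $S_{n+1}$ from two of its generators.
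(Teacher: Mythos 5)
Your argument is correct and follows essentially the same route as the paper's own discussion in Section \ref{Obsclases}: uniqueness of the generalized Fermat group (Theorem \ref{unico}) reduces isomorphism to M\"obius equivalence of the branch sets, Theorem \ref{derivado} supplies the converse, and $U,V$ realize a transposition and an $(n+1)$-cycle on the $n+1$ marked points, generating the full symmetric action. The only cosmetic caveat is that for $n=3$ the ${\mathfrak S}_{n+1}$-action on $\Omega_{n}$ is not faithful (the paper's remark records ${\mathbb G}_{3}\cong {\mathfrak S}_{3}$, not ${\mathfrak S}_{4}$), but this does not affect the orbit statement, which is all the theorem asserts.
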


\begin{rema}
Observe that, for $n \geq 4$, ${\mathbb G}_{n} \cong {\mathfrak S}_{n+1}$ and that ${\mathbb G}_{3} \cong {\mathfrak S}_{3}$.
In this way, the moduli space of generalized Fermat curves of type $(k,n)$, where $k \geq 2$ and $n \geq 3$, is provided by the geometric quotient $\Omega_{n}/{\mathbb G}_{n}$; which happens to be the moduli space of $(n+1)$ points in $\widehat{\mathbb C}$.
\end{rema}

\subsection{A universal property on generalized Fermat curves}
Let $S$ be a given closed Riemann surface admitting a group $G \cong {\mathbb Z}_{p}^{l}$, $l \geq 1$, of conformal automorphisms of type $n+1$ (so $l \leq n$), 
i.e., $S/G$ is the Riemann sphere $\widehat{\mathbb C}$ with exactly $n+1$ cone points, which (up to M\"obius transformations) might be assumed to be given by the set
$${\mathcal B}_{\Lambda}=\{\infty,0,1,\lambda_{1},\ldots,\lambda_{n-2}\},$$
where $\Lambda=(\lambda_{1},\ldots,\lambda_{n-2}) \in \Omega_{n}$. We also assume that each cone point has order $p$ (this is the situation if $p$ is assumed to be a prime integer).

If ${\rm Aut}_{G}(S)$ denotes the normalizer of $G$ in ${\rm Aut}(S)$, then we have a short exact sequence
$$1 \to G \to {\rm Aut}_{G}(S)  \stackrel{\theta}{\to} L \to 1,$$
where $L$ is a suitable subgroup of $M_{{\mathcal B}_{\Lambda}}$. As noted in the introduction, for $p$ a prime integer, the normality (in fact, the uniqueness)  of $G$ is ensured when either: (i) $p \geq 7$, $l=1$ and $n=3$, or (ii) $p \geq 5n-2$ and $n \geq 3$

\begin{theo}\label{teo1}
If  $(S,G)$ and $\Lambda$ are as as above, then the following hold.

\begin{enumerate}
\item There is a subgroup $K \cong {\mathbb Z}_{p}^{n-l}$ of $H$, acting freely on $C_{\lambda}$, and a biholomorphism $\varphi:S \to C_{\lambda}/K$ such that
$\varphi G \varphi^{-1}=H/K$. 

\item Let us assume $p$ is a prime integer. If either: (i) $p \geq 7$, $l=1$ and $n=3$, or (ii) $p \geq 5n-2$ and $n \geq 3$, then ${\rm Aut}(S)\cong \widehat{G}/K$, where $\widehat{G}<{\rm Aut}(C_{\Lambda})$ is the ${\rm Aut}(C_{\Lambda})$-stabilizer of $K$. In particular, $|{\rm Aut}(S)|=p^{l} |\theta({\rm Aut}(S))|$ and $|\theta({\rm Aut}(S))|$ is a divisor of $|M_{{\mathcal B}_{\Lambda}}|$.

\end{enumerate}
\end{theo}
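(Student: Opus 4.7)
The plan is to exploit the Fuchsian uniformization from Section \ref{Sec:uniform} together with Theorem \ref{derivado}, which identifies $C_{\Lambda}$ with $\mathbb{H}^{2}/\Gamma'$. The whole argument takes place at the level of Fuchsian groups and their characteristic subgroups.

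For part (1), I first fix a Fuchsian group $\Gamma$ uniformizing the orbifold $S/G$, with elliptic generators $\delta_{1},\ldots,\delta_{n+1}$ of order $p$ as in Section \ref{Sec:uniform}, and let $N\lhd\Gamma$ be the kernel of the surjection $\phi:\Gamma\to G$ corresponding to the $G$-action on $S$ (so $S=\mathbb{H}^{2}/N$). Since $G\cong\mathbb{Z}_{p}^{l}$ has exponent $p$, the map $\phi$ factors through $\Gamma/\Gamma'\cong\mathbb{Z}_{p}^{n}$; let $K$ be the kernel of the induced homomorphism $\bar\phi:\Gamma/\Gamma'\to G$. By Theorem \ref{derivado}, the identification $\Gamma/\Gamma'\cong H$ realizes $K$ as a subgroup of $H$ of order $p^{n-l}$, hence isomorphic to $\mathbb{Z}_{p}^{n-l}$. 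The biholomorphism $\varphi:S\to C_{\Lambda}/K$ satisfying $\varphi G\varphi^{-1}=H/K$ then comes from $\mathbb{H}^{2}/N=(\mathbb{H}^{2}/\Gamma')/K=C_{\Lambda}/K$. Freeness of the $K$-action is a direct consequence of Remark \ref{Obs1}: since every cone point of $S/G$ has order exactly $p$, no non-trivial power of any $\delta_{i}$ lies in $N$, so $K$ avoids every non-trivial power of the standard generators $a_{1},\ldots,a_{n+1}$.

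For part (2), the hypothesis on $p$, $l$ and $n$ forces $G$ to be the unique subgroup of ${\rm Aut}(S)$ of the prescribed isomorphism and action type, hence normal (indeed characteristic) in ${\rm Aut}(S)$. Therefore every $\alpha\in{\rm Aut}(S)$ descends to $S/G$, so any lift of $\alpha$ to $\mathbb{H}^{2}$ normalizes both $N$ and $\Gamma$. Since $\Gamma'$ is characteristic in $\Gamma$, such a lift also normalizes $\Gamma'$, and hence descends to an automorphism $\hat\alpha\in{\rm Aut}(C_{\Lambda})$; this $\hat\alpha$ automatically normalizes the deck group $K$ of $C_{\Lambda}\to S$, so $\hat\alpha\in\widehat{G}$. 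The assignment $\hat\alpha\mapsto\alpha$ defines a surjection $\widehat{G}\to{\rm Aut}(S)$ with kernel $K$, giving ${\rm Aut}(S)\cong\widehat{G}/K$. The order identity $|{\rm Aut}(S)|=p^{l}|\theta({\rm Aut}(S))|$ follows from the short exact sequence $1\to G\to{\rm Aut}(S)\stackrel{\theta}{\to}\theta({\rm Aut}(S))\to 1$, and $\theta({\rm Aut}(S))\leq M_{\mathcal{B}_{\Lambda}}$ yields the claimed divisibility.

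The main obstacle is the lifting step in part (2): one must ensure that a lift of $\alpha\in{\rm Aut}(S)$ to $\mathbb{H}^{2}$ normalizes \emph{both} $N$ and $\Gamma$. This is precisely where the uniqueness hypothesis is indispensable, since it guarantees that $\alpha$ preserves $G$ and hence descends to the orbifold $S/G$. Once this is secured, the characteristic nature of $\Gamma'$ in $\Gamma$ transports the argument over to $C_{\Lambda}$ with no further work.
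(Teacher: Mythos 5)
Your proposal is correct and follows essentially the same route as the paper: part (1) via the Fuchsian uniformization of $S/G=C_{\Lambda}/H$ and the fact that the abelian quotient $\Gamma\to G$ factors through $\Gamma/\Gamma'\cong H$ (the paper deduces freeness from torsion-freeness of $\Gamma_{S}$, you from Remark \ref{Obs1} plus the cone orders — equivalent observations), and part (2) by using uniqueness of $G$ to descend automorphisms to $S/G$, lift them through the characteristic subgroup $\Gamma'$, and land in the normalizer of $K$. Your write-up merely makes explicit the lifting step that the paper compresses into the parenthetical ``as it is in fact a lifting to $C_{\Lambda}$ of ${\rm Aut}(S)$''.
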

\begin{proof}
As previously seen, if $\Gamma<{\rm PSL}_{2}({\mathbb R})$ is a Fuchsian group, acting on the hyperbolic plane ${\mathbb H}^{2}$, such that ${\mathbb H}^{2}/\Gamma$ is the Riemann orbifold $C_{\Lambda}/H=S/G$, then $C_{\Lambda}={\mathbb H}^{2}/\Gamma'$, where $\Gamma'$ denotes the derived subgroup of $\Gamma$. In particular, this asserts the existence of a normal subgroup $\Gamma_{S}$ of $\Gamma$ such that $\Gamma' \lhd \Gamma_{S} \lhd \Gamma$, $S={\mathbb H}^{2}/\Gamma_{S}$ and $G=\Gamma/\Gamma_{S}$. In other words, there is a subgroup $K$ of $H$, acting freely on $C_{\Lambda}$ such that $S=C_{\Lambda}/K$ and $G=H/K$. We note that $K \cong {\mathbb Z}_{p}^{n-l}$ (as we are assuming the cone points of $S/G$ to be each one of order $p$) and we have obtained part (1).
In order to obtain part (2), let us first observe that the imposed conditions on the prime integer $p$ ensures the uniquenes of $G$ in ${\rm Aut}(S)$, so we have the short exact sequence $1 \to G \to {\rm Aut}(S) \stackrel{\theta}{\to} L=\theta({\rm Aut}(S)) \to 1$, where $L<M_{{\mathcal B}_{\Lambda}}$.
If  $\widehat{G}=\rho^{-1}(L)<{\rm Aut}(C_{\lambda})$ (where $\rho:{\rm Aut}(C_{\Lambda}) \to M_{{\mathcal B}_{\Lambda}}$ is the surjective homomorphism of Section \ref{Sec:uniform}), then $K$ must be invariant under $\widehat{G}$ (as it is in fact a lifting to $C_{\Lambda}$ of ${\rm Aut}(S)$).
\end{proof}

\section{Generalized Fermat pairs of type $(p,3)$}\label{Sec:(p,3)}
In this section, we restrict to the case of generalized Fermat pairs of type $(p,3)$, where $p \geq 3$.
We set $C_{\Lambda}=C_{\lambda}$, ${\mathcal B}_{\Lambda}={\mathcal B}_{\lambda}=\{\infty,0,1,\lambda\}$, where $\lambda \in {\mathbb C} \setminus \{0,1\}$. In this case, 
$M_{\Lambda}=M_{\lambda}$ contains the subgroup 
$$M_{2,\lambda}:=\langle A(z)=\lambda/z, B(z)=(z-\lambda)/(z-1)\rangle \cong {\mathbb Z}_{2}^{2}.$$

Theorem \ref{moduli}, in our situation, reads as follows.

\begin{coro}[$n=3$]\label{coromoduli}
Let $\lambda \neq \mu \in \Omega_{1}={\mathbb C} \setminus \{0,1\}$. Then $C_{\lambda}$ and $C_{\mu}$ are isomorphic if and only if 
$j(\lambda)=j(\mu)$, where $j(x)=(1-x+x^{2})^{2}/x^{2}(x-1)^{2}$ is the classical elliptic modular function, that is, if and only if 
$\lambda$ and $\mu$ are in the same ${\mathbb G}$-orbit, where
$${\mathbb G}=\langle T(z)=1/z, R(z)=1-z\rangle \cong {\mathfrak S}_{3}.$$
Moreover, the ${\mathbb G}$-stabilizer of $\lambda$ is isomorphic to ${\rm Aut}(C_{\lambda})/H \cong M_{\lambda}$.
\end{coro}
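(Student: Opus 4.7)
The plan is to specialize Theorem \ref{moduli} to the case $n=3$, identify the resulting abstract group ${\mathbb G}_{3}$ with the explicit M\"obius group ${\mathbb G}=\langle T,R\rangle$, and then invoke the classical fact that the Legendre $j$-function separates the orbits of this group on $\Omega_{1}={\mathbb C}\setminus\{0,1\}$.

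First I would put $n=3$ into Theorem \ref{moduli}. Then $\Lambda=(\lambda_{1},\ldots,\lambda_{n-2})$ collapses to a single parameter $\lambda\in\Omega_{1}$, and the two generators of ${\mathbb G}_{3}$ specialize to $U(\lambda)=\lambda/(\lambda-1)$ and $V(\lambda)=1/\lambda$. Immediately $V=T$. A one-line computation gives $R=UVU$: indeed $VU(\lambda)=(\lambda-1)/\lambda$ and $U((\lambda-1)/\lambda)=1-\lambda$. Hence ${\mathbb G}=\langle T,R\rangle\subseteq {\mathbb G}_{3}$; since the remark after Theorem \ref{moduli} asserts ${\mathbb G}_{3}\cong {\mathfrak S}_{3}$, this forces ${\mathbb G}={\mathbb G}_{3}$. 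Theorem \ref{moduli} now yields $C_{\lambda}\cong C_{\mu}$ if and only if $\lambda$ and $\mu$ lie in the same ${\mathbb G}$-orbit.

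To obtain the $j$-invariant reformulation, I would observe that the ${\mathbb G}$-orbit of $\lambda$ is contained in
$$\{\lambda,\ 1-\lambda,\ 1/\lambda,\ 1/(1-\lambda),\ (\lambda-1)/\lambda,\ \lambda/(\lambda-1)\},$$
and verify by direct substitution that $j$ is invariant under both generators $T$ and $R$, hence under all of ${\mathbb G}$. Since $j:\Omega_{1}\to{\mathbb C}$ is a rational function whose generic fibers have precisely $|{\mathbb G}|=6$ elements, ${\mathbb G}$-invariance forces $j$ to separate ${\mathbb G}$-orbits, giving $j(\lambda)=j(\mu)\Longleftrightarrow \lambda\sim_{\mathbb G}\mu$.

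Finally, for the statement about the stabilizer, the hypothesis $p\geq 3$ yields $(p-1)(n-1)=2(p-1)>2$, so Theorem \ref{unico}(2) supplies the uniqueness of $H$ inside ${\rm Aut}(C_{\lambda})$, and the short exact sequence of Section \ref{Sec:uniform} produces ${\rm Aut}(C_{\lambda})/H\cong M_{\lambda}$. The matching with the ${\mathbb G}$-stabilizer of $\lambda$ then comes from tracing how an automorphism of $C_{\lambda}$ descends to a M\"obius self-map of the branch set ${\mathcal B}_{\lambda}$: each such M\"obius map, once renormalized to fix $\{\infty,0,1\}$, becomes an element of ${\mathbb G}$ fixing $\lambda$, and conversely. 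The step I expect to require the most care is verifying the group-theoretic identification in the two exceptional orbits (the harmonic orbit of $\lambda=-1$ and the equianharmonic orbit of $\lambda=e^{i\pi/3}$), where $|{\rm stab}_{\mathbb G}(\lambda)|>1$ and $M_{\lambda}$ strictly contains the standard subgroup $M_{2,\lambda}$.
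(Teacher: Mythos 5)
Your main line of argument --- specialize Theorem \ref{moduli} to $n=3$, check that $V=T$ and $R=UVU$ so that $\langle T,R\rangle={\mathbb G}_{3}$, and conclude that $C_{\lambda}\cong C_{\mu}$ is equivalent to ${\mathbb G}$-equivalence of $\lambda$ and $\mu$ --- is exactly what the paper does (it offers no argument beyond the phrase ``Theorem \ref{moduli}, in our situation, reads as follows''), and your computation $UVU(\lambda)=1-\lambda$ is correct. However, two of the steps you describe would not survive the ``direct verification'' you promise. First, the function $j(x)=(1-x+x^{2})^{2}/x^{2}(x-1)^{2}$ as printed is \emph{not} invariant under $T(z)=1/z$: one computes $j(1/x)=x^{2}\,j(x)$ (e.g. $j(2)=9/4$ while $j(1/2)=9$), and the map has degree $4$, not $6$. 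The intended function is the classical one with the cube, $(1-x+x^{2})^{3}/x^{2}(x-1)^{2}$, which is ${\mathbb G}$-invariant of degree $6=|{\mathbb G}|$; with that correction your ``generic fibers equal orbits'' argument goes through, and it can be tightened to cover the two exceptional orbits by noting that the induced map $\Omega_{1}/{\mathbb G}\to{\mathbb C}$ has degree one. You should flag this as a typo rather than claim to verify a false identity.

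Second, and more substantively, your argument for the ``moreover'' clause cannot produce an isomorphism. The map you describe --- send $A\in M_{\lambda}$ to the element of ${\mathbb G}$ obtained by renormalizing the induced permutation of $\{\infty,0,1,\lambda\}$ --- is the composition $M_{\lambda}\hookrightarrow{\mathfrak S}_{4}\twoheadrightarrow{\mathfrak S}_{3}\cong{\mathbb G}$ (quotient by the Klein four-group of double transpositions), and its kernel is exactly $M_{2,\lambda}\cong{\mathbb Z}_{2}^{2}$. What it actually proves is $M_{\lambda}/M_{2,\lambda}\cong{\rm stab}_{\mathbb G}(\lambda)$, i.e. $|M_{\lambda}|=4\,|{\rm stab}_{\mathbb G}(\lambda)|$; for generic $\lambda$ the ${\mathbb G}$-stabilizer is trivial while $M_{\lambda}\cong{\mathbb Z}_{2}^{2}$, so the two groups cannot be isomorphic. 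The literal ``moreover'' statement is therefore off, and your sketch inherits the problem: the phrase ``and conversely'' does not supply injectivity. The identification ${\rm Aut}(C_{\lambda})/H\cong M_{\lambda}$ via uniqueness of $H$ and the exact sequence of Section \ref{Sec:uniform} is fine; it is only the comparison with ${\rm stab}_{\mathbb G}(\lambda)$ that must be stated modulo $M_{2,\lambda}$.
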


\begin{rema}[$n=3$]
As a consequence of the above, for the case $n=3$, we observe that 
the collections $\{-1, 1/2, 2\}$ and $\left\{\left(1+i\sqrt{3}\;\right)/2, (1-i\sqrt{3})/2\right\}$ form two isomorphism classes. If $\lambda \in {\mathbb C} \setminus \{0,1\}$  is different from the previous, then its orbit consists of exactly $6$ points. 
\end{rema}

\subsection{The group of automorphisms of $C_{\lambda}$}
Let us satrt by noting the following facts.
\begin{enumerate}
\item If $\lambda \notin \left\{-1,1/2,2, \left(1 \pm i \sqrt{3}\;\right)/2\right\}$, then $M_{\lambda}=M_{2,\lambda}$. 
\item If $\lambda \in \{-1,1/2,2\}$, then $M_{\lambda} \cong D_{4}$ (the dihedral group of order $8$) as it contains an extra involution $C$: (i) $C(z)=1/z$ for $\lambda=-1$, (ii) $C(z)=z/(2z-1)$ for $\lambda=1/2$ and (iii) $C(z)=z/(z-1)$ for $\lambda=2$.
\item If $\lambda \in \left\{\left(1 \pm i \sqrt{3}\;\right)/2\}\right\}$, then $M_{\lambda} \cong {\mathcal A}_{4}$ (the alternating group of order $12$) as it contains the order three rotation $D(z)=1+\left(1 + i \sqrt{3}\;\right)z/2$.
\end{enumerate}

Let $\sqrt[p]{2}$ be the real $p$-root of $2$ and fix $p$-roots $\sqrt[p]{\lambda}$ and $\sqrt[p]{\lambda-1}$.
It is not so difficult to check that the following linear projective transformations keep $C_{\lambda}$ invariant (so they define conformal automorphisms):

\begin{equation}
\left\{\begin{array}{l}
\widehat{\alpha}([x_{1}:x_{2}:x_{3}:x_{4}])=[x_{2}:\sqrt[p]{\lambda}\; x_{1}: x_{4}: \sqrt[p]{\lambda}\; x_{3}]\\
\widehat{\beta}([x_{1}:x_{2}:x_{3}:x_{4}])=[-x_{3}: x_{4}:\sqrt[p]{\lambda-1}\; x_{1}:-\sqrt[p]{\lambda-1}\; x_{2}]\\
\widehat{\gamma}([x_{1}:x_{2}:x_{3}:x_{4}])=[x_{4}:\sqrt[p]{2}\; x_{1}:x_{2}:-\sqrt[p]{2}\;x_{3}], (\lambda=2)\\
\widehat{\gamma}([x_{1}:x_{2}:x_{3}:x_{4}])=[x_{3}:x_{4}:\sqrt[p]{2}\;x_{2}:-\sqrt[p]{2}\;x_{1}], (\lambda=-1)\\
\widehat{\gamma}([x_{1}:x_{2}:x_{3}:x_{4}])=[\sqrt[p]{2}\;x_{3}:x_{1}:-\sqrt[p]{2}\;x_{4}:x_{2}], (\lambda=1/2)\\
\widehat{\delta}([x_{1}:x_{2}:x_{3}:x_{4}])=[\sqrt[p]{\lambda}\; x_{1}:-x_{4}:x_{2}:x_{3}], (\lambda=\left(1+i\sqrt{3}\;\right)/2)\\
\widehat{\delta}([x_{1}:x_{2}:x_{3}:x_{4}])=[\sqrt[p]{\lambda}\; x_{1}:x_{4}:x_{2}:-x_{3}], (\lambda=\left(1-i\sqrt{3}\;\right)/2).
\end{array}
\right.
\end{equation}

Let us note that 
(i) $\langle \widehat{\alpha}, \widehat{\beta}\rangle\cong {\mathbb Z}_{2}^{2}$,  
(ii) $\widehat{\gamma}^{4}=1$ and (iii) $\widehat{\delta}^{3}=1$.  Also, $\rho(\langle \widehat{\alpha}, \widehat{\beta}\rangle)=M_{2,\lambda}$, so $\rho:\langle \widehat{\alpha}, \widehat{\beta}\rangle \to M_{2,\lambda}$ is an isomorphism.

\begin{theo}[\cite{FGHL}]\label{automorfismos}
The descrition of ${\rm Aut}(C_{\lambda})$ is as follows.

\begin{itemize}
\item[(i)] If $\lambda \notin \{-1,1/2,2,(1\pm i\sqrt{3})/2\}$, then ${\rm Aut}(C_{\lambda})/H \cong {\mathbb Z}_{2}^{2}$, ${\rm Aut}(C_{\lambda})= H \rtimes \langle \widehat{\alpha}, \widehat{\beta}\rangle$ and the signature of $C_{\lambda}/{\rm Aut}(C_{\lambda})$ is $(0;2,2,2,p)$;

\item[(ii)] if $\lambda \in \{-1,1/2,2\}$, then ${\rm Aut}(C_{\lambda})/H \cong D_{4}$, ${\rm Aut}(C_{\lambda})= H \rtimes \langle\widehat{\alpha}, \widehat{\beta}, \widehat{\gamma} \rangle$ and the signature of $C_{\lambda}/{\rm Aut}(C_{\lambda})$ is $(0;2,4,2p)$;

\item[(iii)] if $\lambda \in \{\left(1+i\sqrt{3}\;\right)/2,\left(1-i\sqrt{3}\;\right)/2 \}$, then ${\rm Aut}(C_{\lambda})/H \cong {\mathcal A}_{4}$, ${\rm Aut}(C_{\lambda})= H \rtimes \langle\widehat{\alpha}, \widehat{\beta}, \widehat{\delta} \rangle$ and the signature of $C_{\lambda}/{\rm Aut}(C_{\lambda})$ is $(0;2,3,3p)$.
\end{itemize}
\end{theo}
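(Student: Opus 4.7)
The plan is to combine two structures already in place: the short exact sequence
$$1 \to H \to {\rm Aut}(C_{\lambda}) \xrightarrow{\rho} M_{{\mathcal B}_{\lambda}} \to 1$$
of Section \ref{Sec:uniform}, which applies here because $(p-1)(n-1)=2(p-1)>2$, and the identification $M_{{\mathcal B}_{\lambda}}=M_{\lambda}$ supplied by Corollary \ref{coromoduli}. That identification gives $M_{\lambda}\cong {\mathbb Z}_{2}^{2}$ generically, $M_{\lambda}\cong D_{4}$ when $\lambda\in\{-1,1/2,2\}$, and $M_{\lambda}\cong {\mathcal A}_{4}$ when $\lambda=(1\pm i\sqrt{3})/2$; hence the isomorphism type of ${\rm Aut}(C_{\lambda})/H$ is already pinned down in each case. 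What remains is to (a) exhibit an explicit section of $\rho$ over $M_{\lambda}$, and (b) compute the signature of $C_{\lambda}/{\rm Aut}(C_{\lambda})$.

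For (a), I would verify directly by substitution that each linear projective transformation $\widehat{\alpha},\widehat{\beta},\widehat{\gamma},\widehat{\delta}$ of the statement sends the two defining equations of $C_{\lambda}$ to equivalent equations, once the roots $\sqrt[p]{2},\sqrt[p]{\lambda},\sqrt[p]{\lambda-1}$ are fixed; hence each defines a conformal automorphism of $C_{\lambda}$. Computing the induced action on $-(x_{2}/x_{1})^{p}$ then yields $\rho(\widehat{\alpha})=A$, $\rho(\widehat{\beta})=B$, $\rho(\widehat{\gamma})=C$, and $\rho(\widehat{\delta})=D$ in the relevant cases, so the images of the lifts generate $M_{\lambda}$. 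Since the lifted subgroups $\langle\widehat{\alpha},\widehat{\beta}\rangle$, $\langle\widehat{\alpha},\widehat{\beta},\widehat{\gamma}\rangle$, $\langle\widehat{\alpha},\widehat{\beta},\widehat{\delta}\rangle$ have orders $4$, $8$, $12$ respectively, all coprime to $|H|=p^{3}$ when $p\geq 3$, each such subgroup intersects $H$ trivially and therefore maps isomorphically onto $M_{\lambda}$ under $\rho$. This establishes the semidirect decomposition ${\rm Aut}(C_{\lambda})=H\rtimes \langle\cdots\rangle$ asserted in all three parts.

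For (b), I would apply Riemann--Hurwitz. Using $g_{C_{\lambda}}=1+p^{2}(p-2)$ and $|{\rm Aut}(C_{\lambda})|=4p^{3},8p^{3},12p^{3}$ in the three cases, each claimed signature produces the correct Euler characteristic; for instance
$$(0;2,4,2p)\colon\quad 8p^{3}\left(-2+\tfrac{1}{2}+\tfrac{3}{4}+1-\tfrac{1}{2p}\right)=2p^{2}(p-2)=2g_{C_{\lambda}}-2,$$
and analogously for $(0;2,2,2,p)$ and $(0;2,3,3p)$. To justify the individual branching orders (not just the totals), I would analyse the $M_{\lambda}$-orbit structure on the four cone points of $C_{\lambda}/H=\widehat{\mathbb C}$ and on the fixed-point sets of the non-identity elements of $M_{\lambda}$, using Remark \ref{Obs1} to determine which lifts also acquire a fixed point on $C_{\lambda}$; a cone point of stabilizer order $m$ below which a lift has no fixed point produces a cone of order $mp$ in $C_{\lambda}/{\rm Aut}(C_{\lambda})$, while one that does fix a point produces a cone of order $m$. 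Reading this off reproduces exactly the signatures listed.

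The main obstacle is the algebraic verification in the ${\mathcal A}_{4}$ case, where the lift $\widehat{\delta}$ scales one coordinate and cyclically permutes the other three, and where preservation of both defining equations depends on the specific value $\lambda=(1\pm i\sqrt{3})/2$. Controlling the chosen $p$-th roots carefully—so that in particular $\widehat{\delta}^{3}=1$ and not merely an element of $H$—is the delicate bookkeeping step; everything else reduces to linear algebra and Riemann--Hurwitz.
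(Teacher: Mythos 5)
The paper does not prove this theorem: it is quoted from \cite{FGHL}, and the surrounding text only records the ingredients (the exact sequence $1\to H\to{\rm Aut}(C_{\lambda})\stackrel{\rho}{\to}M_{\lambda}\to 1$, the case analysis of $M_{\lambda}$, and the explicit lifts). Your reconstruction follows exactly the route that this setup suggests, and your Riemann--Hurwitz totals and the orbit analysis of $M_{\lambda}$ on $\{\infty,0,1,\lambda\}$ are the right way to pin down the signatures; I checked the three Euler characteristic computations and they are correct.

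There is, however, one step that does not work as written: your justification of the splitting ${\rm Aut}(C_{\lambda})=H\rtimes\langle\cdots\rangle$. You argue that the lifted subgroups ``have orders $4$, $8$, $12$ respectively, all coprime to $|H|=p^{3}$ when $p\geq 3$, so each intersects $H$ trivially.'' This is circular: the assertion that $\langle\widehat{\alpha},\widehat{\beta},\widehat{\delta}\rangle$ has order $12$ (rather than $12p^{k}$ for some $k>0$) is precisely the statement that it meets $H$ trivially, which is what you are trying to prove. It is also literally false for $p=3$ in case (iii), since $\gcd(12,27)=3$, and the theorem is asserted for all primes $p\geq 3$. Knowing $\widehat{\delta}^{3}=1$ is not enough: if, say, $\widehat{\delta}\widehat{\alpha}\widehat{\delta}^{-1}$ equalled $h\widehat{\beta}$ for some nontrivial $h\in H$, the generated group would contain $h$ and be strictly larger than $12$. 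The repair is either to verify that the defining relations of $M_{\lambda}$ hold exactly (not merely modulo $H$) among the chosen lifts --- in particular $\widehat{\gamma}^{2}\in\langle\widehat{\alpha},\widehat{\beta}\rangle$ on the nose, and conjugation by $\widehat{\gamma}$, respectively $\widehat{\delta}$, maps $\{\widehat{\alpha},\widehat{\beta},\widehat{\alpha}\widehat{\beta}\}$ into itself on the nose --- which is a finite matrix computation with the fixed roots $\sqrt[p]{2},\sqrt[p]{\lambda},\sqrt[p]{\lambda-1}$; or, for $p\geq 5$, to invoke Schur--Zassenhaus on the extension $1\to H\to{\rm Aut}(C_{\lambda})\to M_{\lambda}\to 1$, reserving the direct computation for $p=3$ in case (iii). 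With that step tightened, the argument is complete.
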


\subsection{On the field of moduli of $C_{\lambda}$}
The field of moduli ${\mathcal M}_{\lambda}$ of $C_{\lambda}$ is defined as the fixed field of the group $G_{\lambda}=\{\rho \in {\rm Gal}({\mathbb C}): C_{\rho(\lambda)} \cong C_{\lambda}\}$, where ${\rm Gal}({\mathbb C})$ denotes the group of field automorphisms of the complex numbers and the symbol $\cong$ denotes isomorphism of curves. ${\mathcal M}_{\lambda}$ is the field of definition of the isomorphic class of $C_{\lambda}$ in moduli space and it coincides with the intersection of all the fields of definition of $C_{\lambda}$ \cite{Koizumi}. By Corollary \ref{coromoduli}, ${\mathcal M}_{\lambda}={\mathbb Q}(j(\lambda))$.
In case (i), of Theorem \ref{automorfismos}, these have odd signature \cite{AQ}, so they are definable over their field of moduli ${\mathbb Q}(j(\lambda))$.
The cases (ii) and (iii)  are quasiplatonic curves, so they are also definable over their corresponding field of moduli \cite{Wolfart} (in these cases this field being  ${\mathbb Q}$). Note that in case (ii) the curves are already defined over its field of moduli ${\mathbb Q}$. The curve $C_{\left(1\pm i\sqrt{3}\;\right)/2}$ is defined over a quadratic extension of its field of moduli.

\subsection{A combinatorial point of view} \label{combinatoria}
Let $H$ be the generalized Fermat group of type $(p,3)$ of $C_{\lambda}$.
As $H$ is a normal subgroup, we may look at the conjugation action of ${\rm Aut}(C_{\lambda})$ on $H$. This action can be seen in a combinatorial simple form \cite{GHL} by looking at the short exact sequence $1 \to H \to {\rm Aut}(C_{\lambda}) \stackrel{\rho}{\to} M_{\lambda} \to 1$. We proceed to describe it below.

If $U \in {\rm Aut}(C_{\lambda})$, then $\rho(U) \in M_{\lambda}$ induces a permutation on the points $p_{1}=\infty$, $p_{2}=0$, $p_{3}=1$ and $p_{4}=\lambda$, so a permutation $\sigma_{U} \in {\mathfrak S}_{4}$. It can be seen that, for each $j \in \{1,2,3,4\}$, $U  a_{j}  U^{-1}=a_{\sigma_{U}(j)}$.
This provides a permutation representation 
$$\Theta:{\rm Aut}(C_{\lambda}) \to {\mathfrak S}_{4}: U \mapsto \Theta(U)=\sigma_{U},$$
whose kernel is $H$. In particular, $\Theta({\rm Aut}(C_{\lambda})) \cong {\rm Aut}(C_{\lambda})/H \cong M_{\lambda}$. 

For the above given automorphisms $\widehat{\alpha}$, $\widehat{\gamma}$ and $\widehat{\delta}$ we have:
$$
\begin{array}{c}
\Theta(\widehat{\alpha})=(1,2)(3,4), \; \theta(\widehat{\beta})=(1,3)(2,4),\\
\Theta(\widehat{\gamma})=(1,4,2,3), \; (\lambda=-1),\\
\Theta(\widehat{\gamma})=(2,4,3,1), \; (\lambda=1/2),\\
\Theta(\widehat{\gamma})=(1,2,3,4), \; (\lambda=2),\\
\Theta(\widehat{\delta})=(2,3,4), \; \left(\lambda \in \left\{\left(1+i\sqrt{3}\;\right)/2, \left(1-i\sqrt{3}\;\right)/2 \right\}\right).
\end{array}
$$

Note that $\Theta(\widehat{\gamma})^{2} \in \langle \Theta(\widehat{\alpha}),\Theta(\widehat{\beta})\rangle \cong {\mathbb Z}_{2}^{2}$. 

The above explicit representations (together Theorem \ref{automorfismos}) permit to list the only three (up to conjugation in ${\mathfrak S}_{4}$) possible images $\Theta({\rm Aut}(C_{\lambda}))$:
$${\mathcal U}_{2}=\langle (1,2)(3,4), (1,3)(2,4) \rangle \cong {\mathbb Z}_{2}^{2}, \quad \lambda \notin \{-1,1/2,2,(1\pm i\sqrt{3})/2\}.$$
$${\mathcal U}_{8}=\langle (1,2)(3,4), (1,2,3,4) \rangle \cong D_{4} \quad (\lambda =2).$$
$${\mathcal U}_{12}=\langle (1,2)(3,4), (2,3,4) \rangle \cong {\mathcal A}_{4} \quad \left(\lambda =\left(1 \pm i\sqrt{3}\;\right)/2 \right).$$

We also consider the following subgroups of ${\mathfrak S}_{4}$, which are obtained by restricting $\Theta$ to subgroups of ${\rm Aut}(C_{\lambda})$ containing $H$ (again, up to conjugation):
$${\mathcal U}_{0}=\{()\}, \; {\mathcal U}_{1}=\langle (1,2)(3,4) \rangle \cong {\mathbb Z}_{2},$$
$${\mathcal U}_{3}=\langle (2,3,4) \rangle \cong {\mathbb Z}_{3} \quad \left(\lambda =\left(1 \pm i\sqrt{3}\;\right)/2 \right), \;
{\mathcal U}_{4}=\langle (1,2,3,4) \rangle \cong {\mathbb Z}_{4} \quad (\lambda =2).$$

\begin{lemm}\label{lema1}
Let us assume $p \geq 3$ is a prime integer.
If $\Theta({\rm Aut}(C_{\lambda}))={\mathcal U}_{12}$, then every non-trivial subgroup of $H$ which is ${\rm Aut}(C_{\lambda})$-invariant under conjugation contains 
elements acting with fixed points on $C_{\lambda}$.
\end{lemm}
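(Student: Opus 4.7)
The plan is to view $H$ as the ${\mathbb F}_{p}$-vector space of dimension $3$ on which ${\mathcal U}_{12}\cong {\mathcal A}_{4}$ acts by permuting the four standard generators $a_{1},a_{2},a_{3},a_{4}$, which satisfy $a_{1}+a_{2}+a_{3}+a_{4}=0$ (additive notation in $H$). By Remark~\ref{Obs1}, the non-identity elements of $H$ with fixed points on $C_{\lambda}$ are precisely the nonzero multiples $k\,a_{j}$, so what I must prove is that every nonzero ${\mathbb F}_{p}[{\mathcal A}_{4}]$-submodule $N$ of $H$ contains one of these.

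The key step will be a symmetrization over point-stabilizers in ${\mathcal A}_{4}$. Since $p\geq 3$, $4\not\equiv 0 \pmod{p}$, so each $h\in H$ admits a unique expression
$$h = c_{1}a_{1}+c_{2}a_{2}+c_{3}a_{3}+c_{4}a_{4}, \qquad c_{1}+c_{2}+c_{3}+c_{4}\equiv 0 \pmod{p}.$$
For each $j\in\{1,2,3,4\}$, let $G_{j}<{\mathcal A}_{4}$ be the cyclic order-$3$ subgroup of even permutations fixing $j$. A direct expansion, using both relations $\sum_{i} c_{i}=0$ and $\sum_{i} a_{i}=0$, yields the identity
$$\sum_{\sigma\in G_{j}}\sigma\cdot h \;=\; 3c_{j}a_{j}-c_{j}\sum_{i\neq j}a_{i}\;=\;3c_{j}a_{j}+c_{j}a_{j}\;=\;4\,c_{j}\,a_{j}.$$

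To finish, I would take any nonzero $h\in N$; not all $c_{j}$ vanish, so pick $j_{0}$ with $c_{j_{0}}\neq 0$. Then $4\,c_{j_{0}}a_{j_{0}}\in N$, because $N$ is closed under the ${\mathcal A}_{4}$-action and under addition; and this element is nonzero since $4$ is invertible in ${\mathbb F}_{p}$. As it is a power of the standard generator $a_{j_{0}}$, it is by Remark~\ref{Obs1} a non-identity element of $N$ acting with fixed points on $C_{\lambda}$. The only delicate point is the invertibility of $4$ modulo $p$: it is both what makes the symmetric normalization of $h$ unique and what ensures the resulting standard element is nonzero, and this is precisely where the hypothesis $p\geq 3$ is used.
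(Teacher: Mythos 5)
Your proof is correct. The normalization is legitimate: since $p\geq 3$ is prime, $4$ is invertible mod $p$, so the trace-zero hyperplane $\{\sum_i c_i=0\}$ in ${\mathbb F}_p^4$ meets the line spanned by $(1,1,1,1)$ trivially and maps isomorphically onto $H$, giving the unique equivariant normal form; the displayed identity $\sum_{\sigma\in G_j}\sigma\cdot h=4c_ja_j$ checks out (the sum is $3c_ja_j+(\sum_{i\neq j}c_i)(\sum_{i\neq j}a_i)=3c_ja_j+(-c_j)(-a_j)$); and an ${\rm Aut}(C_\lambda)$-invariant subgroup is closed under each $\sigma\cdot(\,\cdot\,)$ for $\sigma\in{\mathcal U}_{12}$ and under the group operation, so it contains $4c_{j_0}a_{j_0}\neq 1$, which by Remark \ref{Obs1} has fixed points. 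Compared with the paper: the paper's argument shares the same germ --- its final step multiplies the three $\widehat{\delta}$-conjugates $(a_1a_2^r)(a_1a_3^r)(a_1a_4^r)=a_1^{3-r}$, which is exactly your $G_1$-symmetrization --- but it first normalizes a generator to the form $a_1a_2^{r_2}a_3^{r_3}$ with $r_1=1$, $r_4=0$, uses $\widehat{\alpha}$-conjugation to kill $r_3$, and then must treat the degenerate exponent $r=3$ (where $a_1^{3-r}$ collapses to the identity) by a separate computation $ab^{-3}=a_1^{-8}$. Your trace-zero normalization eliminates that case split entirely, since the average comes out as $4c_ja_j$ with $4c_j$ automatically a unit; the price is a slightly more abstract setup (viewing $H$ as an ${\mathbb F}_p[{\mathcal A}_4]$-module), but the payoff is a uniform, computation-free conclusion. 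Both proofs use only the order-$3$ point stabilizers inside ${\mathcal U}_{12}$, so neither needs the full ${\mathcal A}_4$-action.
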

\begin{proof}
Let us recall (Remark \ref{Obs1}) that the non-trivial elements acting with fixed points on $C_{\lambda}$ are exactly the powers of $a_{1}$, $a_{2}$, $a_{3}$ and $a_{4}$.
The condition $\Theta({\rm Aut}(C_{\lambda}))={\mathcal U}_{12}$ asserts that $\widehat{\alpha}, \widehat{\delta} \in {\rm Aut}(C_{\lambda})$. 
Let $K$ be a non-trivial subgroup of $H$, being ${\rm Aut}(C_{\lambda})$-invariant under conjugation.
If $a=a_{1}^{r_{1}}a_{2}^{r_{2}}a_{3}^{r_{3}}a_{4}^{r_{4}} \in K$ (where $0 \leq r_{j} \leq p-1\}$), different from the identity, 
then $b=\widehat{\alpha} a  \widehat{\alpha}^{-1}=a_{1}^{r_{2}}a_{2}^{r_{1}}a_{3}^{r_{4}}a_{4}^{r_{3}}, c=\widehat{\delta} a  \widehat{\delta}^{-1}=a_{1}^{r_{1}}a_{2}^{r_{4}}a_{3}^{r_{2}}a_{4}^{r_{3}} \in K$.
As $a_{1}a_{2}a_{3}a_{4}=1$, we may assume $r_{4}=0$. Up to a power, we also may assume (up to a permutation of the indices) that $r_{1}=1$, i.e., $a=a_{1}a_{2}^{r_{2}}a_{3}^{r_{3}}$, $b=a_{1}^{r_{2}-r_{3}}a_{2}^{1-r_{3}}a_{3}^{-r_{3}}$ and $c=a_{1}^{1-r_{3}}a_{2}^{-r_{3}}a_{3}^{r_{2}-r_{3}}$. As $ab=a_{1}^{1+r_{2}-r_{3}}a_{2}^{1+r_{2}-r_{3}} \in K$, then we may also assume $r_{3}=0$.
Now, we may assume $a=a_{1}a_{2}^{r}, b=a_{1}^{r}a_{2}, c=a_{1}a_{3}^{r} \in K$, where $0 \leq r \leq p-1$. If $r=0$, then $a=a_{1} \in K$ and we are done. Assume $r\neq 0$.
As $\widehat{\delta} a_{1}a_{3}^{r}  \widehat{\delta}^{-1}=a_{1}a_{4}^{r} \in K$, then 
$a_{1}^{3-r}=(a_{1}a_{2}^{r})(a_{1}a_{3}^{r})(a_{1}a_{4}^{r}) \in K$. So, if $r \neq 3$, we are done. Let us assume $r=3$ (so $p \geq 5$). In this case, $ab^{-3}=a_{1}^{-8} \in K$ and we are done.
\end{proof}

\begin{rema}
There is a natural action of ${\mathfrak S}_{4}$ as a group of automorphisms of $H$, defined by the rule that $\rho \in {\mathfrak S}_{4}$ sends each $a_{j}$ to $a_{\rho(j)}$. As 
$\Theta({\rm Aut}(C_{\lambda}))< {\mathfrak S}_{4}$ we may consider the restriction of such action to it. It can be seen that this is the same 
conjugation action of ${\rm Aut}(C_{\lambda})$ on $H$ as described above.
The action of ${\mathfrak S}_{4}$ on $H$ induces a natural faithful permutation action of the symmetric group ${\mathfrak S}_{4}$ on the collection of subgroups $K \cong {\mathbb Z}_{p}^{l}$, for $l \in \{1,2\}$, acting freely on $C_{\lambda}$.
\end{rema}

\begin{example}
Let us consider the transformation $a_{1}^{k_{1}}a_{2}^{k_{2}}a_{3}^{k_{3}}a_{4}^{k_{4}} \in H$, where $0 \leq k_{i} \leq p-1$. In matrix form, this is given by
$$\left[\begin{array}{cccc}
\omega_{p}^{k_{1}} & 0 & 0 & 0\\
0 & \omega_{p}^{k_{2}} & 0 & 0 \\
0 & 0 & \omega_{p}^{k_{3}} & 0 \\
0 & 0 & 0 & \omega_{p}^{k_{4}}
\end{array}
\right] \in {\rm PGL}_{3}({\mathbb C})
$$
The action of a permutation $\rho \in {\mathfrak S}_{4}$ is just the action of permutation of the elements of the diagonal. For instance, 
the action of $\Theta(\widehat{\alpha})$ on it is given by
$$\left[\begin{array}{cccc}
\omega_{p}^{k_{2}} & 0 & 0 & 0\\
0 & \omega_{p}^{k_{1}} & 0 & 0 \\
0 & 0 & \omega_{p}^{k_{4}} & 0 \\
0 & 0 & 0 & \omega_{p}^{k_{3}}
\end{array}
\right]
$$
\end{example}

\subsection{Description of subgroups of $H$ acting freely on $C_{\lambda}$}
Next, we describe those subgroups $K \cong {\mathbb Z}_{p}^{m}$, where $m \in \{1,2\}$ and $p \geq 3$ is a prime integer, of $H$ acting freely on $C_{\lambda}$. 

\subsubsection{Case $K \cong {\mathbb Z}_{p}$}\label{Sec:r=1}
As noted in Remark \ref{Obs1},  the only elements of $H$ having fixed points on $C_{\lambda}$ are the powers of $a_{1}, a_{2}, a_{3}$ and $a_{4}$. In this way, a non-trivial element $a_{1}^{k_{1}}a_{2}^{k_{2}}a_{3}^{k_{3}} \in H$, where $k_{j} \in \{0,1,\ldots,p-1\}$, will have no fixed points on $C_{\lambda}$  if it is not a power of some $a_{j}$, that is: (i) no two of the exponents $k_{i}$'s are zero and (ii) if all the exponenst are different from zero, then they are not equal.
As a consequence, the order $p$ cyclic subgroups of $H$ acting freely are the following $p^{2}+p-3$ ones:
$$\langle a_{2}a_{3}^{k} \rangle, \; k=1,\ldots, p-1,$$
$$\langle a_{1}a_{2}^{r}a_{3}^{s} \rangle, \; (r,s) \in \{0,1,\ldots,p-1\} \setminus \{(0,0), (1,1)\}.$$

\subsubsection{Case $K \cong {\mathbb Z}_{p}^{2}$}\label{Sec:r=2}
Every subgroup of $H$ isomorphic to ${\mathbb Z}_{p}^{2}$ is given as the kernel of a surjective homomorphism $\eta:H \to {\mathbb Z}_{p}=\{0,1,\ldots,p-1\}$. Any two of these homomorphisms produces the same kernel when one is obtained from the other by post-composition by an automorphism of ${\mathbb Z}_{p}$. The condition for the kernel to act freely on $C_{\lambda}$ is equivalent to have that $\eta(a_{j}) \neq 0$, for every $j=1,2,3,4$. 
Each surjective homomorphism $\eta$, for which $\eta(a_{j}) \neq 0$, for every $j=1,2,3,4$, induces a tuple $I=(I_{1},\ldots,I_{p-1})$, where  $I_{k} \subset \{1,2,3,4\}$ (it might be empty), 
$I_{k_{1}} \cap I_{k_{2}}=\emptyset$ (for $k_{1} \neq k_{2}$), $I_{1} \cup \cdots \cup I_{p-1}=\{1,2,3,4\}$ and $\sum_{k=1}^{p-1} k \#(I_{k}) \equiv 0 \mod p$. This tuple is defined by the rule that $j \in I_{k}$ if and only if $\eta(a_{j})=k$. Conversely, each tuple as above is induced by a unique such a surjective homomorphism.
Set ${\mathcal F}_{p}$ be the collection of such tuples $I=(I_{1},\ldots,I_{p-1})$.
The group ${\rm Aut}({\mathbb Z}_{p}) \cong {\mathbb Z}_{p-1}$ acts by permuting these tuples, so the above permits (with some care) to obtain that the number of equivalent classes (so the number of the subgroups of $H$ we are looking for) is $p^{2}-p-1$.

Let $I=(I_{1},\ldots,I_{p-1}) \in {\mathcal F}_{p}$. As said before, this tuple determines a unique surjective homomorphism $\eta$ whose kernel $K_{I}$ is a subgroup of $H$ acting freely on $C_{\lambda}$ and isomorphic to ${\mathbb Z}_{p}^{2}$. If $i,j \in I_{k}$, then $a_{i}a_{j}^{-1} \in K_{I}$. More generaly, assume $i \in I_{k_{1}}$ and $j \in I_{k_{2}}$, where $k_{1}<k_{2}$. As $k_{1}$ generates ${\mathbb Z}_{p}$, there is some $d \in \{1,\ldots,p-1\}$ such that $k_{2} \equiv dk_{1} \mod p$. It follows that $\eta(a_{i}^{-d}a_{j})=0$, that is, $a_{i}^{-d}a_{j} \in K_{I}$. In this way, knowing the tuple $I$ one can easily produce a set of generators for $K_{I}$. This observation permits to see that there are exactly $(p^{2}-p-1)$ different subgroups of $H$ isomorphic to ${\mathbb Z}_{p}^{2}$ and acting freely on $C_{\lambda}$

\section{Riemann surfaces with ${\mathbb Z}_{p}^{l}$ conformal actions of type $4$}

\subsection{}
Let $S$ be a given closed Riemann surface admitting a group $G \cong {\mathbb Z}_{p}^{l}$, $l  \in \{1,2\}$ and $p \geq 3$ a prime integer, of conformal automorphisms of type $4$, i.e., $S/G$ is the Riemann sphere $\widehat{\mathbb C}$ with exactly four cone points, which can be assumed to be $\infty, 0,1, \lambda$. 
As before, $M_{\lambda}$ denotes the ${\rm PSL}_{2}({\mathbb C})$-stabilizer of the set $\{\infty,0,1,\lambda\}$ (and its corresponding subgroup $M_{2,\lambda} \cong {\mathbb Z}_{2}^{2}$). By Remark \ref{Obsclases}, we will assume that either: (1) $\lambda \notin \left\{-1,1/2,2, \left(1 \pm i\sqrt{3}\;\right)/2 \right\}$ or  (2) $\lambda=2$ or (3) $\lambda=\left(1+i\sqrt{3}\;\right)/2$. 

As a consequence of Theorem \ref{teo1}, 
there is a subgroup $K \cong {\mathbb Z}_{p}^{3-l}$ of $H$, acting freely on $C_{\lambda}$, and there is a biholomorphism $\varphi:S \to C_{\lambda}/K$ such that
$\varphi G \varphi^{-1}=H/K$. So, from now on, we assume $S=C_{\lambda}/K$ and $G=H/K$.

The conjugation action of ${\rm Aut}(C_{\lambda})$ on $H$ induces a conjugation action on the collection of subgroups of $H$ acting freely on $C_{\lambda}$ and which are isomorphic to ${\mathbb Z}_{p}^{3-l}$. Let  $\widehat{G}<{\rm Aut}(C_{\lambda})$ be the stabilizer of $K$ by such an action (i.e., the normalizer of $K$ in ${\rm Aut}(C_{\lambda})$). If 
 ${\rm Aut}_{G}(S)$ is the normalizer of $G$ in ${\rm Aut}(S)$, then 
${\rm Aut}_{G}(S)=\widehat{G}/K$. We have a short exact sequence
$$1 \to G \to {\rm Aut}_{G}(S)  \stackrel{\theta}{\to} L \to 1,$$
where $L$ is a suitable subgroup of $M_{{\mathcal B}_{\Lambda}}$. Note that $\rho^{-1}(L)=\widehat{G}$.
Theorem \ref{automorfismos} may be used to obtain explicit descriptions of the groups ${\rm Aut}_{G}(S)$. 

\begin{rema}
If we also assume (i) $p \geq 7$ for $l=1$ and (ii) $p \geq 13$ for $l=2$, then ${\rm Aut}(S)={\rm Aut}_{G}(S)=\widehat{G}/K$. 
In particular, $|{\rm Aut}(S)|=p^{l} |\theta({\rm Aut}(S))|$ and $|\theta({\rm Aut}(S))|$ is a divisor of $|M_{\lambda}|$.
\end{rema}

\subsection{Case $l=1$: ${\mathbb Z}_{p}$ actions of type $4$}\label{Sec:l=1}
In this case, $S$ has genus $p-1$ and $K \cong {\mathbb Z}_{p}^{2}$ is one of the $p^{2}-p-1$ possible acting freely subgroups as described in Section \ref{Sec:r=2}.

\begin{rema}[Algebraic models]\label{Obs4}
As already noted in the introduction, $S$ can be described by an algebraic curve of the form
$$y^{p}=x(x-1)^{m}(x-\lambda)^{n},$$
where $m,n \in \{1,\ldots, p-1\}$ are such that $1+m+n \nequiv 0 \mod p$; in such a model, $G=\langle (x,y) \mapsto (x,\omega_{p} y)\rangle$, where $\omega_{p}=e^{2 \pi i/p}$. Moreover, it was explained how to explicitly construct algebraic models of the automorphisms of $S$ belonging to ${\rm Aut}_{G}(S)$, so for $p \geq 7$, all of its automorphisms.
\end{rema}

The permutation action $\Theta:{\rm Aut}(C_{\lambda}) \to {\mathfrak S}_{4}$, described in Section \ref{combinatoria}, codifies in a combinatorial way the congugacy action of ${\rm Aut}(C_{\lambda})$ on $H$ and so on the collection of its subgroups and it may be used to compute $\widehat{G}$. Take two generators of $K$, say $a_{1}^{k_{1}}a_{2}^{k_{2}}a_{3}^{k_{3}}a_{4}^{k_{4}}$ and $a_{1}^{r_{1}}a_{2}^{r_{2}}a_{3}^{r_{3}}a_{4}^{r_{4}}$. If $U \in {\rm Aut}(C_{\lambda})$ and $\sigma:=\Theta(U)$, then $U \in \widehat{G}$ if and only if  $a_{\sigma(1)}^{k_{1}}a_{\sigma(2)}^{k_{2}}a_{\sigma(3)}^{k_{3}}a_{\sigma(4)}^{k_{4}} \in K$ and $a_{\sigma(1)}^{r_{1}}a_{\sigma(2)}^{r_{2}}a_{\sigma(3)}^{r_{3}}a_{\sigma(4)}^{r_{4}} \in K$.

In Section \ref{combinatoria}, we have described the possibilities (up to conjugation) for $\Theta(F)<{\mathfrak S}_{4}$, for $H \leq F \leq {\rm Aut}(C_{\lambda})$; these were denoted as ${\mathcal U}_{j}$, for $j=0,1,2,3,4,8,12$. If $F=\widehat{G}$ (the normalizer of $K$), then $F/K={\rm Aut}_{G}(S)$.
By Lemma \ref{lema1}, ${\mathcal U}_{12} \cong {\mathcal A}_{4}$ is impossible (as we are asking to keep invariant $K$ and it has no non-trivial elements with fixed points). 

Below, we describe the possibilities of ${\rm Aut}_{G}(S)$, in terms of the others cases ${\mathcal U}_{j}$ (it is not hard to write down the corresponding algebraic models as in Remark \ref{Obs4} \cite{RR}). Recall that, for $p \geq 7$, ${\rm Aut}_{G}(S)={\rm Aut}(S)$.

\subsubsection{\bf Case ${\mathcal U}_{0}$}\label{caso1}
This happens, for instance, for $K=\langle a_{1}a_{2}, a_{1}a_{3}^{-1}\rangle$.
In this (generic) case $\widehat{G}=H$, ${\rm Aut}_{G}(S)=G \cong {\mathbb Z}_{p}$ and  $S/{\rm Aut}_{G}(S)$ has signature $(0;p,p,p,p)$. 

\subsubsection{\bf Case ${\mathcal U}_{1}$}\label{caso2}
In this case, $\widehat{G} \cong H \rtimes {\mathbb Z}_{2}$, where the ${\mathbb Z}_{2}$ factor is either one of the followings:
$ \langle \widehat{\alpha} \rangle$, $\langle \widehat{\beta} \rangle$, $\langle \widehat{\alpha}\widehat{\beta} \rangle$,
$\langle \widehat{\alpha}\widehat{\gamma} \rangle$ or $\langle \widehat{\beta}\widehat{\gamma} \rangle$.
In the first three cases, ${\rm Aut}_{G}(S) = G \rtimes {\mathbb Z}_{2} \cong D_{p}$ and the signature of $S/{\rm Aut}_{G}(S)$ is $(0;2,2,p,p)$. In the last two ones ${\rm Aut}_{G}(S)=G \times {\mathbb Z}_{2} \cong {\mathbb Z}_{2p}$ and the signature for $S/{\rm Aut}_{G}(S)$ is $(0;p,2p,2p)$. These families are in the closure of the above family in case \ref{caso1}.

\subsubsection{\bf Case ${\mathcal U}_{2}$}\label{caso3}
In this case, $\widehat{G} \cong H \rtimes {\mathbb Z}^{2}_{2}$, where the ${\mathbb Z}^{2}_{2}$ factor is 
$ \langle \widehat{\alpha}, \widehat{\beta} \rangle$.
In this case, ${\rm Aut}_{G}(S) = G \rtimes {\mathbb Z}^{2}_{2}$ and the signature of $S/{\rm Aut}_{G}(S)$ is $(0;2,2,2,p)$.  This family is in the closure of any of the first three families in case \ref{caso2}.

\subsubsection{\bf Case ${\mathcal U}_{3}$}\label{caso4}
In this case, $\widehat{G} \cong H \rtimes {\mathbb Z}_{3}$, where ${\mathbb Z}_{3}$ can be though to be generated by $\widehat{\delta}$ (or any of its $L$-conjugate), ${\rm Aut}_{G}(S)=G \times {\mathbb Z}_{3}$ ($ \cong {\mathbb Z}_{3p}$ for $p \geq 5$) and the signature for $S/{\rm Aut}_{G}(S)$ is $(0;3,p,3p)$. In this case, by Theorem \ref{automorfismos}, $\lambda=(1 \pm i \sqrt{2})/2$. This family is in the closure of the above family in case \ref{caso1}.

\subsubsection{\bf Case ${\mathcal U}_{4}$}\label{caso5}
In this case, $\widehat{G} \cong H \rtimes {\mathbb Z}_{4}$, where ${\mathbb Z}_{4}$ can be though to be generated by $\widehat{\gamma}$ (or any of its $L$-conjugate), ${\rm Aut}_{G}(S)=G \rtimes {\mathbb Z}_{4} \cong {\mathbb Z}_{p} \rtimes {\mathbb Z}_{4}$ and the signature for $S/{\rm Aut}_{G}(S)$ is $(0;4,4,p)$. In this case, by Theorem \ref{automorfismos}, $\lambda \in \{-1,1/2,2\}$ (so $S$ is definable over ${\mathbb Q}$). This family is in the closure of the family in case \ref{caso1} and also of the first three families in case \ref{caso2}.

\subsubsection{\bf Case ${\mathcal U}_{8}$}\label{caso6}
In this case, $\widehat{G} \cong H \rtimes D_{4}$, where $D_{4}$ can be though to be generated by $L$ and $\widehat{\gamma}$ (or any of its $L$-conjugate), ${\rm Aut}_{G}(S)=G \rtimes D_{4} \cong {\mathbb Z}_{p} \rtimes D_{4}$ and the signature for $S/{\rm Aut}_{G}(S)$ is $(0;2,4,2p)$. It is well known that, up to isomorphisms, there is only one possible such a Riemann surface $S$, this being Accola-Maclachlan curve $y^{2}=x^{2p}-1$ and ${\rm Aut}_{G}(S)={\rm Aut}(S)$. Note that this curve belongs to the clousure of the families in cases \ref{caso3} and  \ref{caso5} \cite{CI}. See also Example \ref{ejemplo2}.

\begin{rema}[Non-maximal signatures]\label{obsnomax}
Note that in each of the cases \ref{caso1}, \ref{caso2}, \ref{caso4} and \ref{caso5}, the signature of the quotient orbifold $S/{\rm Aut}_{G}(S)$ 
is non-maximal \cite{Singerman}, that is, if $\Delta$ is a Fuchsian group such that ${\mathbb H}^{2}/\Delta=S/{\rm Aut}_{G}(S)$, then there is another Fuchsian group $\widehat{\Delta}$ contining it as a finite index subgroup. The group 
$\Delta$ contains as a normal subgroup a Fuchsian group $\Gamma$ (of signature $(0;p,p,p,p)$) such that ${\mathbb H}^{2}/\Gamma=C_{\lambda}/H$ and, if 
$\Gamma'$ denotes the commutator subgroup of $\Gamma$, then $C_{\lambda}={\mathbb H}^{2}/\Gamma'$ and $H=\Gamma/\Gamma'$. There is also a normal subgroup $\Gamma_{S}$ of $\Gamma$ such that $\Gamma' \leq \Gamma_{S}$ ($K=\Gamma_{S}/\Gamma'$ and $G=\Gamma/\Gamma_{S}$). The group $\Gamma_{S}$ is a normal subgroup of $\Delta$ such that $\Delta/\Gamma_{S}={\rm Aut}_{G}(S)$. Now, if we assume $p \geq 7$, then ${\rm Aut}(S)={\rm Aut}_{G}(S)$ and, in this case, $\Gamma_{S}$ cannot be normal subgroup of $\widehat{\Delta}$. If $p=5$, the situation may change (in this case it might be that  ${\rm Aut}(S) \neq {\rm Aut}_{G}(S)$) as can be seen in \cite{BCI2}.
\end{rema}

\begin{rema}[Field of moduli versus field of definitions]
Let us assume $p \geq 7$, so ${\rm Aut}_{G}(S)={\rm Aut}(S)$.
In case \ref{caso1} with the signature $(0;p,2p,2p)$ and cases \ref{caso4}, \ref{caso5}, \ref{caso5} and \ref{caso6}, the curves are quasiplatonic, so they are definable over their fields of moduli \cite{Wolfart}.
In case \ref{caso3}, the curve has odd signature, so it is also defined over its field of moduli \cite{AQ}. In the cases \ref{caso1} and \ref{caso2} with the signature $(0;2,2,p,p)$
as $S/{\rm Aut}(S)$ has genus zero, there is a minimal field of definition for $S$ which is at most a quadratic extension over its field of moduli.
\end{rema}

\begin{example}\label{ejemplo2}
Let $p \geq 7$ be a prime integer and 
consider the subgroup $K=\langle a_{1}a_{2}, a_{1}a_{3}^{-1}\rangle \cong {\mathbb Z}_{p}^{2}$. This is the kernel of the surjective homomorphism $\eta:H \to {\mathbb Z}=\{0,1,\ldots,p-1\}$ defined by $\eta(a_{1})=\eta(a_{3})=1$ and $\eta(a_{2})=\eta(a_{4})=p-1$. It can be seen that $K$ acts freely on $C_{\lambda}$, so it defines a closed Riemann surface $S=C_{\lambda}/K$ of genus $g=p-1$ admitting $G=H/K \cong {\mathbb Z}_{p}$ as a group of conformal automorphisms such that $S/G=C_{\lambda}/H$, i.e., the Riemann sphere $\widehat{\mathbb C}$ and cone points $\infty, 0, 1$ and $\lambda$. 
Since $\Theta(\widehat{\alpha})=(1,2)(3,4)$ and $a_{2}a_{4}^{-1}= (a_{1}a_{2})(a_{2}a_{4}^{-1})(a_{1}a_{2})^{-1}$, we see that $K$ is invariant under $\widehat{\alpha}$.
Similarly, as  $\Theta(\widehat{\beta})=(1,3)(2,4)$ and $a_{3}a_{4}=(a_{1}a_{2})^{-1}$ and $a_{1}a_{3}^{-1}=(a_{1}a_{3})^{-1}$, we also have that $K$ is invariant under $\widehat{\beta}$.
In particular, ${\rm Aut}(S)$ contains the group $G \rtimes \langle a,b\rangle \cong {\mathbb Z}_{p} \rtimes {\mathbb Z}_{2}^{2}$, where $a$ (respectively, $b$) is induced by $\widehat{\alpha}$ (respectively, $\widehat{\beta}$). If $\lambda \neq 2$, then ${\rm Aut}(S)=G \rtimes \langle a,b\rangle$.
If $\lambda=2$, then $K$ is also invariant under $\widehat{\gamma}$, and ${\rm Aut}(S)=G \rtimes D_{4}$ (this corresponds to Accola-Maclachlan's curve). 
The above surfaces correspond to the algebraic curves of the form
$$y^{p}=x(x-1)(x-\lambda)^{p-1}.$$ 
The induced automorphisms on $S$ corresponding to $a$ and $b$ (in the the above algebraic model) is given by
$$(x,y) \stackrel{a}{\mapsto} \left(\frac{\lambda}{x},\frac{-\lambda (x-1)(x-\lambda)}{xy} \right), \; 
(x,y) \stackrel{b}{\mapsto} \left(\frac{x-\lambda}{x-1},\frac{-(1-\lambda) x(x-\lambda)}{(x-1)y} \right),$$
and, for $\lambda=2$, the induced automorphism $c$ by $\widehat{\gamma}$ is
$$(x,y) \stackrel{c}{\mapsto} \left(\frac{2}{2-x},\frac{-2y(x-1)}{(x-2)^{2}} \right)$$
\end{example}

\subsection{Case $l=2$: ${\mathbb Z}_{p}^{2}$ actions of type $4$}\label{Sec:caso:l=2}
In this case,  $K$ is one of the following $p^{2}+p-3$ possible subgroup (Section \ref{Sec:r=1}):
$$\langle a_{2}a_{3}^{k} \rangle, \; k=1,\ldots, p-1,$$
$$\langle a_{1}a_{2}^{r}a_{3}^{s} \rangle, \; (r,s) \in \{0,1,\ldots,p-1\} \setminus \{(0,0), (1,1)\}.$$

We have a surjective homomorphism $\varphi:H \to H/K=G=\langle A,B\rangle$, where $\varphi(a_{1})=A$ and $\varphi(a_{2})=B$. As $a_{1}$ and $a_{2}$ have both fixed points on $C_{\lambda}$, the induced automorphisms $A$ and $B$ also have fixed points on $S=C_{\lambda}/K$.

\begin{rema}[Non-hyperellipticity]
Note that $S$ cannot be hyperelliptic. If that was the case, the group $G \cong {\mathbb Z}_{p}^{2}$ must induce an isomorphic group (as the hyperelliptic involution does not belong to $G$) as a subgroup of ${\rm PSL}_{2}({\mathbb C})$, which is not possible for $p \neq 2$.
\end{rema}

\begin{rema}[Algebraic models] Let us assume $p \geq 5$ (the case $p=3$ will be observed later).
The group $G$ has $(p+1)$ cyclic subgroups: $\langle B \rangle$ and $\langle A B^{l}\rangle$, where $l=0,1,\ldots,p-1$. We already know that both $\langle B \rangle$ and $\langle A \rangle$ act with fixed points. Let us consider the rest of the cyclic subgroups $\langle A B^{l}\rangle$, where $l=1,\ldots,p-1$.
As $S/G$ has only four cone points, at most two of them may act with fixed points. 
Since $p-1>2$, we can find $l_{1} \neq l_{2}$ such that $\langle A B^{l_{1}}\rangle$ and $\langle A B^{l_{2}}\rangle$ both acts freely on $S$. 
If we set $A_{1}=A B^{l_{1}}$ and $B_{1}=A B^{l_{2}}$,  then
(a) $G=\langle A_{1}, B_{1}\rangle$ and (b) both, $S/\langle A \rangle$ and $S/\langle B \rangle$, are cyclic $p$-gonal curves, each one a cyclic branched covering of $\widehat{\mathbb C}$ (with branch values at $\infty, 0, 1$ and $\lambda$). It follows that $S$ can be algebraically described by the fiber product
$$ \; \left\{
\begin{array}{l}
y^{p}=x(x-1)^{a}(x-\lambda)^{b}\\
z^{p}=x(x-1)^{c}(x-\lambda)^{d}
\end{array}
\right\},
$$
where $a,b,c,d \in \{1,\ldots,p-1\}$ are such that $1+a+b,1+c+d \nequiv 0 \mod(p)$. In this (singular) algebraic model, $A_{1}(x,y,z)=(x,\omega_{p}y,z)$ and $B_{1}(x,y,z)=(x,y,\omega_{p}z)$.
\end{rema}

\subsubsection{\bf Case 1}
If $K=\langle a_{2}a_{3}^{k}\rangle$, then $\varphi(a_{3})=B^{\hat{k}}$ and $\varphi(a_{4})=A^{-1}B^{\hat{k}-1}$, where $k \hat{k} \equiv 1 \mod p$. This, in particular asserts that $0$ and $1$ are projection of fixed points of $B$ (so $B$ has exactly $2p$ fixed points), that $A$ has exactly $p$ fixed points, that $A^{-1}B^{\hat{k}-1}$ has also exactly $p$ fixed points and that every other element of $G$ not in $\langle A \rangle \cup \langle B \rangle \cup \langle A^{-1}B^{\hat{k}-1} \rangle$ has no fixed points. In this case, the quotient orbifold ${\mathcal O}=S/\langle B \rangle$ has genus zero (so we may assume it to be $\widehat{\mathbb C}$)  and exactly $2p$ cone points, each one of order $p$. Moreover, the element $A$ induces a conformal automorphism of order $p$ of ${\mathcal O}$. Without lost of generality, we may assume it to be the rotation $x \mapsto \omega_{p} x$ and the cone points to be given by the $p$-roots of unity and the $p$-roots of $\lambda$. 

\begin{rema}\label{remaE}
In this case, a much more simple algebraic model for $S$ (now including the case $p \geq 3$) is given by
$$E_{\lambda}: \; y^{p}=(x^{p}-1)(x^{p}-\lambda)^{m}, \; m \in \{1,\ldots,p-1\},$$
where $G=\langle A(x,y)=(\omega_{p} x, y), B(x,y)=(x,\omega_{p} y)\rangle$, 
and for which $\pi(x,y)=x^{p}$ is a regular branched cover with deck group $G$.
\end{rema}

\subsubsection{\bf Case 2}
If $K=\langle a_{1}a_{2}^{r}a_{3}^{s}\rangle$, where either $r=0$ or $s=0$ (but not both), then the situation is similar to the previous casse.

\subsubsection{\bf Case 3}
If $K=\langle a_{1}a_{2}^{r}a_{3}^{s}\rangle$, where $r,s>0$, then $\varphi(a_{3})=A^{-\hat{s}}B^{-\hat{s}r}$ and $\varphi(a_{4})=A^{\hat{s}-1}B^{\hat{s}r-1}$, where $s \hat{s} \equiv 1 \mod p$. This, in particular asserts that each $A$, $B$, $AB^{r}$ and $A^{\hat{s}-1}B^{\hat{s}r-1}$ has exactly $p$ fixed points and 
 every other element of $G$ not in $\langle A \rangle \cup \langle B \rangle \cup \langle AB^{r} \rangle \cup \langle A^{\hat{s}-1}B^{\hat{s}r-1}  \rangle$ has no fixed points.
In this case, the quotient orbifold ${\mathcal O}=S/\langle A \rangle$ has genus $(p-1)/2$  and exactly $p$ cone points, each one of order $p$. Moreover, the element $B$ induces a conformal automorphism of order $p$ of ${\mathcal O}$ with exactly three fixed points. We may assume that there is a $p$-fold regular branched cover from ${\mathcal O} \to \widehat{\mathbb C}$ whose branch values are $0$, $1$ and $\lambda$.

\begin{rema}
In this situation, for $p=3$, we may observe that an algebraic model for $S$ is given as
$$ \; \left\{
\begin{array}{l}
y^{3}=x(x-1)^{a}(x-\lambda)^{b}\\
z^{3}=(x-1)^{c}(x-\lambda)^{d}
\end{array}
\right\},
$$
where $a,b,c,d \in \{1,2\}$ are such that $a+b \equiv 2 \mod 3$ and $c+d \nequiv 0 \mod(3)$. In this (singular) algebraic model, $A(x,y,z)=(x,\omega_{3}y,z)$ and $B(x,y,z)=(x,y,\omega_{3}z)$.
\end{rema}

\subsubsection{\bf A description of ${\rm Aut}(S)$ in terms of $K$}
Recall that, for $p \geq 13$, we have the equality ${\rm Aut}_{G}(S)={\rm Aut}(S)$.

\begin{theo}\label{teol=2}
Following with the previous notations.
\begin{enumerate}
\item If either $K=\langle a_{2}a_{3}^{k}\rangle$, $k \neq 1$, or $K=\langle a_{1}a_{3}^{s}\rangle$ or $K=\langle a_{1}a_{2}^{r}\rangle$, where $r,s \notin \{1,p-1\}$,
then ${\rm Aut}_{G}(S)=G \cong {\mathbb Z}_{p}^{2}$. In this case, $S/{\rm Aut}_{G}(S)$ has signature $(0;p,p,p,p)$.

\item If $K=\langle a_{2}a_{3}\rangle$, then ${\rm Aut}_{G}(S)=G \rtimes  {\mathbb Z}_{2}^{2}$.
In this case, $S/{\rm Aut}_{G}(S)$ has signature $(0;2,2,2,p)$ and, in particular, $S$ is definable over its field of moduli. 

\item If $K=\langle a_{1}a_{3}\rangle$ and $\lambda=2$, then ${\rm Aut}_{G}(S)=G \rtimes {\mathbb Z}_{4}$.
In this case, $S/{\rm Aut}_{G}(S)$ has signature $(0;4,4,p)$ and, in particular, $S$ is definable over its field of moduli. 

\item If either $K=\langle a_{1}a_{3}^{-1}\rangle$ or $K=\langle a_{1}a_{2}^{\pm 1}\rangle$,
then ${\rm Aut}_{G}(S)=G \rtimes {\mathbb Z}_{2}$. In this case, $S/{\rm Aut}_{G}(S)$ has signature $(0;2,2,p,p)$.  

\item If $K=\langle a_{1}a_{2}^{r}\rangle$, then (i) for $r \notin\{1, p-1\}$, ${\rm Aut}_{G}(S)=G$, (ii) for $r=1$, ${\rm Aut}_{G}(S)=G \rtimes {\mathbb Z}_{2}^{2}$ (in which case $S/{\rm Aut}_{G}(S)$ has signature $(0;2,2,2,p)$ and (iii) for $r=p-1$, ${\rm Aut}_{G}(S)=G \rtimes {\mathbb Z}_{2}$ (in which case $S/{\rm Aut}_{G}(S)$ has signature $(0;2,2,p,p)$.

\item If $K=\langle a_{1}a_{3}^{s}\rangle$, then (i) for $s \neq 1$, ${\rm Aut}_{G}(S)=G$, and (ii) for $s=1$, ${\rm Aut}_{G}(S)=G \rtimes {\mathbb Z}_{2}$ (in which case $S/{\rm Aut}_{G}(S)$ has signature $(0;2,2,p,p)$.

\item If $K=\langle a_{1}a_{2}^{r}a_{3}^{s} \rangle$, where $r,s>0$, then ${\rm Aut}_{G}(S)=G \cong {\mathbb Z}_{p}^{2}$ (in which case, $S/{\rm Aut}_{G}(S)$ has signature $(0;p,p,p,p)$) with the only following exceptions:
\begin{enumerate}
\item (i) $r=s-1$, $s\in \{2,\ldots,p-1\}$ or (ii) $r=s+1$, $s \in \{1,\ldots,p-2\}$ or (iii) $r=p+1-s$, $s\in\{2,\ldots,p-1\}$,
in which case, ${\rm Aut}_{G}(S)=G \rtimes {\mathbb Z}_{2}$ and $S/{\rm Aut}_{G}(S)$ has signature $(0;2,2,p,p)$;

\item $s^{2}-3s+3 \equiv 0 \mod p$ and $r \equiv 2s-s^{2} \mod p$, where ${\rm Aut}_{G}(S)=G \rtimes {\mathbb Z}_{3}$ (in which  case, $S/{\rm Aut}_{G}(S)$ has signature $(0;3,p,3p)$ and $S$ is definable over its field of moduli); and
\item $s^{3}-s^{2}+s-1 \equiv 0 \mod p$ and $r \equiv s-s^{2} \mod p$, where ${\rm Aut}_{G}(S)=G \rtimes {\mathbb Z}_{4}$ (in which case, $S/{\rm Aut}_{G}(S)$ has signature $(0;4,4,p)$  and $S$ is gain definable over its field of moduli).
\end{enumerate}
\end{enumerate}

\end{theo}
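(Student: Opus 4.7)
The strategy is to reduce the description of $\widehat{G}$ to a combinatorial problem in $\mathfrak{S}_{4}$ via the homomorphism $\Theta$ of Section \ref{combinatoria}. Recall that $U\in {\rm Aut}(C_{\lambda})$ permutes $a_{1},\ldots,a_{4}$ by conjugation according to $\sigma_{U}=\Theta(U)\in \mathfrak{S}_{4}$. Hence $U$ normalises $K=\langle a_{1}^{k_{1}}a_{2}^{k_{2}}a_{3}^{k_{3}}a_{4}^{k_{4}}\rangle$ if and only if the tuple $(k_{\sigma_{U}^{-1}(j)})_{j=1}^{4}$ is a scalar multiple of $(k_{1},k_{2},k_{3},k_{4})$ modulo the relation $(1,1,1,1)\equiv 0 \pmod p$ coming from $a_{1}a_{2}a_{3}a_{4}=1$. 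Computing $\widehat{G}$ therefore reduces to determining, for each of the generators listed in Section \ref{Sec:r=1},
\[
\Sigma_{K}:=\{\sigma\in \mathfrak{S}_{4} : \sigma\cdot K=K\},
\]
and intersecting it with $\Theta({\rm Aut}(C_{\lambda}))\in \{\mathcal{U}_{2},\mathcal{U}_{8},\mathcal{U}_{12}\}$, the option being dictated by $\lambda$ via Theorem \ref{automorfismos}.

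For parts (1)--(6), the generator of $K$ has enough zero exponents that $\Sigma_{K}$ lies inside the Klein four group $\mathcal{U}_{2}$, and a direct inspection identifies which of $\widehat{\alpha},\widehat{\beta},\widehat{\alpha}\widehat{\beta}$ actually stabilise $K$. The special appearance of $K=\langle a_{1}a_{3}\rangle$ with $\lambda=2$ in part (3) is verified by checking that the $4$-cycle $\Theta(\widehat{\gamma})=(1,2,3,4)$ sends the exponent tuple $(1,0,1,0)$ to $(0,1,0,1)\equiv -(1,0,1,0)\pmod{(1,1,1,1)}$.

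Part (7) is the main technical content. For $K=\langle a_{1}a_{2}^{r}a_{3}^{s}\rangle$ with $r,s>0$, we first run through the six double transpositions in $\mathcal{U}_{2}$; the pairs $(r,s)$ yielding exactly one of them produce the three families (a)(i)--(iii). To detect when a $3$-cycle preserves $K$, we solve the linear system imposed by $\sigma=(2,3,4)$ on the exponent vector. Eliminating the scalar factor $t$ between the four coordinate equations yields $r\equiv 2s-s^{2}$ together with $s^{2}-3s+3\equiv 0\pmod p$, giving (b); the same procedure applied to the $4$-cycle $(1,2,3,4)$ yields $r\equiv s-s^{2}$ and $s^{3}-s^{2}+s-1\equiv 0\pmod p$, giving (c). These enlargements of ${\rm Aut}_{G}(S)$ require $\Theta({\rm Aut}(C_{\lambda}))$ to contain a $3$-cycle or a $4$-cycle, hence $\lambda=(1\pm i\sqrt{3})/2$ or $\lambda\in\{-1,1/2,2\}$, respectively; one must also verify that the resulting solutions are not compatible with any extra permutation that would enlarge the group further, and that they do not collapse into case (a).

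Once $\widehat{G}$, and hence ${\rm Aut}_{G}(S)=\widehat{G}/K$, is identified, the signature of $S/{\rm Aut}_{G}(S)$ follows from Riemann--Hurwitz applied with $g_{S}=(p-1)^{2}$ (from $g_{C_{\lambda}}=1+p^{2}(p-2)$ and $|K|=p$ acting freely) together with the explicit count of which cyclic subgroups of $G$ have fixed points, already carried out in the three case distinctions preceding the statement. The field-of-moduli assertions follow from Wolfart's theorem \cite{Wolfart} in the quasiplatonic signatures $(0;3,p,3p)$ and $(0;4,4,p)$, and from the odd-signature criterion of Artebani--Quispe \cite{AQ} in the signature $(0;2,2,2,p)$. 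The main obstacle is the careful arithmetic bookkeeping in (7): verifying that the congruences characterising (7)(b) and (7)(c) are genuinely exclusive from (7)(a) and from each other, that the admissible $(r,s)$ do indeed correspond to subgroups $K$ acting freely on $C_{\lambda}$, and that the semidirect product structure of $\widehat{G}/K$ is as asserted.
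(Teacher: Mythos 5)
Your proposal follows essentially the same route as the paper's proof: both reduce the computation of $\widehat{G}$ to deciding which permutations $\sigma\in\Theta({\rm Aut}(C_{\lambda}))$ send the exponent tuple of the generator of $K$ to a scalar multiple of itself modulo $(1,1,1,1)$, and the congruences you extract in the $3$-cycle and $4$-cycle cases ($s^{2}-3s+3\equiv 0$, $r\equiv 2s-s^{2}$, respectively $s^{3}-s^{2}+s-1\equiv 0$, $r\equiv s-s^{2}$) are exactly those derived in the paper. The only slips are cosmetic: $\mathcal{U}_{2}$ contains three (not six) nontrivial double transpositions, and your blanket claim that $\Sigma_{K}\subset\mathcal{U}_{2}$ in cases (1)--(6) is contradicted by case (3), which you then handle correctly anyway.
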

\begin{proof}
{\bf (1) Case $K=\langle a_{2}a_{3}^{k} \rangle$.}
As $\widehat{\alpha}$ conjugates $a_{2}a_{3}^{k}$ to $a_{1}a_{4}^{k}$, and the last element belong to $K$ only for $k=1$ (this can be seen by looking at the diagonal forms of the standard generators), it follows that this automorphism does not keep invariant (under conjugation) $K$ with the only exception $k=1$. 
Similarly, $\widehat{\beta}$ conjugates $a_{2}a_{3}^{k}$ to $a_{4}a_{1}^{k}$ and this belong to $K$ only for $k=1$.
The element $\widehat{\delta}$ (respectively, $\widehat{\gamma}$) conjugates $a_{2}a_{3}^{k}$ to $a_{3}a_{4}^{k} \notin K$.

{\bf (2) Case $K=\langle a_{1}a_{2}^{r}a_{3}^{s} \rangle$.} 
The element $\widehat{\alpha}$ conjugates $a_{1}a_{2}^{r}a_{3}^{s}$ to $a_{1}^{r}a_{2}a_{4}^{s}$, which belongs to $K$ only for (a) $r \in \{1,p-1\}$ and $s=0$ or (b) $r=s-1$ and $s \geq 1$. In fact, for $a_{1}^{r}a_{2}a_{4}^{s}$, to belongs to $K$, we must be some $l \in \{1,\ldots,p-1\}$ such that 
$a_{1}^{r}a_{2}a_{4}^{s}=(a_{1}a_{2}^{r}a_{3}^{s})^{l}$, that is,
(i) $\omega_{p}^{l}=\omega_{p}^{r-s}$, (ii) $\omega_{p}^{lr}=\omega_{p}^{1-s}$ and (iii) $\omega_{p}^{ls}=\omega_{p}^{-s}$. It follows from (iii) that either $l=p-1$ or $s=0$.
If $s=0$ (so $r \neq 0$), then (i) asserts that $l \equiv r \mod p$, i.e., $l=r$ (since $1 \leq l,r \leq p-1$) and by (ii) we get that $r^{2}-1$ is divisible by $p$, so either $r=1$ or $r=p-1$. If $s>0$, then $l=p-1$. By (i) it follows that $r-s+1 \equiv 0 \mod p$, i.e., $r=s-1$.
The element $\widehat{\beta}$ conjugates $a_{1}a_{2}^{r}a_{3}^{s}$ to $a_{1}^{s}a_{3}a_{4}^{r}$ and similar computations as above permits to see that $a_{1}^{s}a_{3}a_{4}^{r}$ belongs to $K$ only for $0 \leq s \leq p-2$ and $r=s+1$. 
The element $\widehat{\beta}\;\widehat{\alpha}$ conjugates $a_{1}a_{2}^{r}a_{3}^{s}$ to $a_{2}^{s}a_{3}^{r}a_{4}$ which belongs to $K$ only for $s+r \equiv 0 \mod p$.
The element $\widehat{\delta}$ conjugates $a_{1}a_{2}^{r}a_{3}^{s}$ to $a_{1}a_{3}^{r}a_{4}^{s}$. 
If $r=0$ or $s=0$, then $a_{1}a_{3}^{r}a_{4}^{s} \notin K$. Now, let us assume $r,s>0$.
For the element $a_{1}a_{3}^{r}a_{4}^{s}$ to belong to $K$ there must be some $l \in \{1,\ldots,p-1\}$ such that (i) $l \equiv 1-s \mod p$, (ii) $lr \equiv -s \mod p$ and (iii) $ls \equiv r-s \mod p$. In particular, $s \neq 1$ (otherwise, it obligates for $l=0$), i.e., $s \in \{2,\ldots,p-1\}$ and (i) ensures that $l=p+1-s$. The other two conditions (ii) and (iii) enusre 
that both $(1-s)r+s$ and $(1-s)s+s-r$ are divisible by $p$. Note that neither $r=1$.
From the last one, $r=(1-s)s+s+qp$, some $q$, and replacing it in the second one, we obtain that $s(s^{2}-3s+3)$ is divisible by $p$, i.e., $s^{2}-3s+3$ is divisible by $p$. 
The element $\widehat{\gamma}$ conjugates $a_{1}a_{2}^{r}a_{3}^{s}$ to $a_{2}a_{3}^{r}a_{4}^{s}$. If $r=0$, then for the previous element to stay in $K$ it is needed that $s=1$. Now assume $r>0$. If $s=0$, then $a_{2}a_{3}^{r} \notin K$. So, we also assume $s>0$. For the element $a_{2}a_{3}^{r}a_{4}^{s}$ to belong to $K$ there must be some $l \in \{1,\ldots,p-1\}$ such that (i) $l \equiv -s \mod p$, (ii) $lr \equiv 1-s \mod p$ and (iii) $ls \equiv r-s \mod p$. Form (i) we must have $l=p-s$. These equations asserts that $s^{3}-s^{2}+s-1 \equiv 0 \mod p$ and $r \equiv s-s^{2} \mod p$.
\end{proof}

\begin{rema}
As was the case for the case $l=1$, in the above theorem we obtain (in many cases) that the signature of $S/{\rm Aut}_{G}(S)$ is non-maximal. Now, in the case that $p \geq 13$, we have that ${\rm Aut}_{G}(S)={\rm Aut}(S)$ and we have a similar observation as done in Remark \ref{obsnomax}. Situation might be quite different for $p \in \{3,5,7,11\}$.

\end{rema}

\begin{example}
Let $p \geq 13$ be a prime integer.
Some axamples of algebraic curves representing Riemann surfaces $S$ admitting a group $G \cong {\mathbb Z}_{p}^{2}$ such that $S/G$ has signature $(0;p,p,p,p)$ are provided by  the curves $E_{\lambda}$ of Remark \ref{remaE}. 
\begin{enumerate}
\item If $m=p-1$, then $E_{\lambda}$ admits the extra commuting automorphisms of order two 
$$\alpha(x,y)=\left( \frac{\sqrt[p]{\lambda}}{x}, \sqrt[p]{-\lambda^{p-1}} \; y^{p-1}\right), \; 
\beta(x,y)=\left( \frac{x^{p}-\lambda}{y}, (1-\lambda) x^{p-1}\right).$$

In this case, ${\rm Aut}(S)=G \rtimes \langle \alpha,\beta\rangle \cong {\mathbb Z}_{p}^{2} \rtimes {\mathbb Z}_{2}^{2}$ and $S/{\rm Aut}(S)$ has signature $(0;2,2,2,p)$. This corresponds to the group $K=\langle a_{2}a_{3}\rangle$.

\item If $m^{2}+1 \equiv 0 \mod p$, then $E_{2}$ admits the order four automorphism
$$\gamma(x,y)=\left( \frac{(x^{p}-1)(x^{p}-2)^{m-1}}{y^p}, \sqrt[p]{(-1)^{(p+1)/2}2^{(p-1)/2}} \; (x^{p}-2)^{(m^{2}+1)/p}\right).$$

In this case, ${\rm Aut}(S)=G \rtimes \langle \gamma\rangle \cong {\mathbb Z}_{p}^{2} \rtimes {\mathbb Z}_{4}$ and $S/{\rm Aut}(S)$ has signature $(0;4,4,p)$. This case corresponds to the group $K=\langle a_{1}a_{3}\rangle$.

\item If $m=1$, then $E_{\lambda}$ admits the order two automorphism
$$\alpha(x,y)=\left( \frac{\sqrt[p]{\lambda}}{x}, \sqrt[p]{\lambda} \; y\right).$$

In this case, ${\rm Aut}(S)=G \rtimes \langle \alpha\rangle \cong {\mathbb Z}_{p}^{2} \rtimes {\mathbb Z}_{2}$ and 
$S/{\rm Aut}(S)$ has signature $(0;2,2,p,p)$. This corresponds to the groups 
$K=\langle a_{1}a_{3}^{-1}\rangle$ or $K=\langle a_{1}a_{2}^{\pm 1}\rangle$.

\end{enumerate}

\end{example}

\section{An isogenous decomposition of the jacobian variety}
Let $S$ be a closed Riemann surface of genus $g \geq 2$. The jacobian variety of $S$ is given by the complex torus  $JS=H^{1,0}(S)^{\vee}/H_{1}(S,{\mathbb Z})$, where $H^{1,0}(S)^{\vee}$ is the dual space of the $g$-dimensional space $H^{1,0}(S)$ of holomorphic one-forms, and $H_{1}(S,{\mathbb Z})$ is the lattice generated by the integration of forms along homology curves. The intersection form in homology induces a principal polarization of $JS$, that is, $JS$ has the structure of a principally polarized abelian variety \cite{BL}. 
Two principally polarized abelian varieties $A_{1}$ and $A_{2}$ are called isogenous if there is a 
non-constant surjective morphism $h:A_{1} \to A_{2}$ between the corresponding tori with finite kernel ($h$  
is called an isogeny). A principally polarized abelian variety $A$ is called decomposable if it is isogenous to the product of abelian varieties of smaller dimensions (otherwise, it is said to be simple). As a consequence of Poincar\'e complete reducibility theorem \cite{Poincare}, every principally polarized abelian variety can be decompososed as the product of (simple) ones. 
In particular, the Jacobian variety $JS$ can be decomposed, up to isogeny, into a product of simple sub-varieties. 

If we are given a group $G$ of conformal automorphisms of $S$, in \cite{LR}, it has been obtained a method to obtain an isogenous decomposition of $JS$. In such a decomposition, some of the factor may be jacobian varieties and others might not.
One may wonder for an isogenous decompositions of $JS$ such that each of the factors is the jacobian variety of some closed Riemann surface.
The following result, due to Kani and Rosen \cite{K-R}, provides sufficient conditions for it to happens. If $L<{\rm Aut}(S)$, we denote by $g_{L}$ the genus of the quotient orbifold $S/L$ and by $S_{L}$ its the underlying Riemann surface structure.

\begin{theo}[Corollary of Theorem C in \cite{K-R}]\label{coroKR}
Let $S$ be a closed Riemann surface of genus $g \geq 1$ and let $H_{1},\ldots,H_{t}<{\rm Aut}(S)$ such that:
\begin{enumerate}
\item $H_{i} H_{j}=H_{j} H_{i}$, for all $i,j =1,\ldots,t$;
\item $g_{H_{i}H_{j}}=0$, for $1 \leq i < j \leq t$
\item $g=\sum_{j=1}^{t} g_{H_{j}}$.
\end{enumerate}
Then 
$$JS \cong_{isog.}  \prod_{j=1}^{t} JS_{H_{j}}.$$
\end{theo}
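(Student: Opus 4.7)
The plan is to exhibit the statement as a direct application of Kani--Rosen's Theorem C, which converts an \emph{idempotent relation} among subgroups of $\mathrm{Aut}(S)$, expressed in terms of the averaging elements
$$
\epsilon_{H} := \frac{1}{|H|}\sum_{h\in H} h \in \mathbb{Q}[\mathrm{Aut}(S)],
$$
into an isogeny relation between the corresponding quotient Jacobians $JS_{H}$. Concretely, I aim to verify the identity
$$
\sum_{j=1}^{t}\epsilon_{H_{j}} \;=\; \epsilon_{\{1\}}
$$
as endomorphisms of $H^{1,0}(S)$ (equivalently, of $H_{1}(S,\mathbb{Q})$). Since $\epsilon_{H}$ acts on $H^{1,0}(S)$ as the orthogonal projector onto $\pi_{H}^{*}H^{1,0}(S_{H})=H^{1,0}(S)^{H}$, and the latter has dimension $g_{H}$, the relation above will immediately encode the desired isogeny $JS\sim_{isog.}\prod_{j=1}^{t} JS_{H_{j}}$ via Kani--Rosen.

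The first step uses commutativity (condition (1)): whenever $H_{i}$ and $H_{j}$ commute elementwise, $H_{i}H_{j}$ is a subgroup of $\mathrm{Aut}(S)$ and a short computation shows $\epsilon_{H_{i}}\epsilon_{H_{j}}=\epsilon_{H_{i}H_{j}}$. Second, condition (2) guarantees that $\epsilon_{H_{i}H_{j}}$ acts as zero on $H^{1,0}(S)$ for $i\ne j$, because $\pi_{H_{i}H_{j}}^{*}H^{1,0}(S_{H_{i}H_{j}})=0$ when $g_{H_{i}H_{j}}=0$. Consequently, the operators $p_{j}:=\epsilon_{H_{j}}|_{H^{1,0}(S)}$ form a system of \emph{pairwise orthogonal} projectors: $p_{i}p_{j}=0$ for $i\ne j$, and $p_{j}^{2}=p_{j}$.

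With orthogonality in hand, $p:=\sum_{j=1}^{t}p_{j}$ satisfies $p^{2}=\sum_{i,j}p_{i}p_{j}=\sum_{j}p_{j}=p$, so $p$ is itself a projector. Its rank equals its trace, which by condition (3) is
$$
\mathrm{tr}(p)\;=\;\sum_{j=1}^{t}\dim H^{1,0}(S)^{H_{j}}\;=\;\sum_{j=1}^{t}g_{H_{j}}\;=\;g\;=\;\dim H^{1,0}(S).
$$
Hence $p$ is a rank-$g$ projector on a $g$-dimensional space, forcing $p=\mathrm{id}_{H^{1,0}(S)}$. The same argument applied to the complex conjugate action on $H^{0,1}(S)$ (or directly on $H_{1}(S,\mathbb{Q})$, whose dimension is $2g$ and on which $\epsilon_{H}$ has rank $2g_{H}$) yields the idempotent relation on the level of rational homology.

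Applying Theorem C of \cite{K-R} to this relation produces the claimed isogeny $JS\cong_{isog.}\prod_{j=1}^{t} JS_{H_{j}}$; no further computation is needed, as each $\pi_{H_{j}}^{*}$ realizes $JS_{H_{j}}$ (up to isogeny) as the image of the idempotent $\epsilon_{H_{j}}$ inside $JS$. The only delicate point is verifying $\epsilon_{H_{i}}\epsilon_{H_{j}}=\epsilon_{H_{i}H_{j}}$ for commuting subgroups, but this is a standard computation using that $H_{i}H_{j}=H_{j}H_{i}$ forces $H_{i}H_{j}$ to be a subgroup and that averaging twice reproduces averaging over the product.
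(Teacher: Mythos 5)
Your argument is correct and complete: the identity $\epsilon_{H_i}\epsilon_{H_j}=\epsilon_{H_iH_j}$ for permutable subgroups, the vanishing of these products on $H^{1,0}(S)$ forced by $g_{H_iH_j}=0$, and the trace count from condition (3) do yield the idempotent relation $\sum_j\epsilon_{H_j}=1$ in ${\rm End}^{0}(JS)$, from which the Kani--Rosen machinery gives the stated isogeny. The paper offers no proof of this statement (it is quoted directly as a corollary of Theorem C of \cite{K-R}), and your derivation is precisely the standard one underlying that citation, so there is nothing further to compare.
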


\subsection{Decomposition for generalized Fermat curves}
Let $\Lambda=(\lambda_{1},\ldots,\lambda_{n-2}) \in \Omega_{n}$ and consider the generalized Fermat curve $C_{\Lambda}$ together its generalized Fermat group $H \cong {\mathbb Z}_{p}^{n}$, where $p \geq 2$ is a prime integer such that $(p-1)(n-1)>2$. As a consequence of Kani-Rosen result, the following isgenous decomposition is known.

\begin{theo}[\cite{CHQ}]\label{(p,n)}
$$JC_{\Lambda} \cong_{isog.} \prod_{H_r} JS_{H_r},$$
where $H_{r}$ runs over all subgroups of $H$ which are isomorphic to ${\mathbb Z}_{p}^{n-1}$, with quotient orbifold $S/H_{r}$ of genus at least one, and $S_{H_r}$ denoting the underlying Riemann surface of the orbifold.
The cyclic $p$-gonal curves $S_{H_r}$ run over all curves of the form
$$y^{p}=\prod_{j=1}^{r}(x-\mu_{j})^{\alpha_{j}},$$
where $\{\mu_{1},\ldots,\mu_{r}\} \subset \{\infty,0,1,\lambda_{1},\ldots,\lambda_{n-2}\}$, $\mu_{i} \neq \mu_{j}$ if $i \neq j$,  $\alpha_{j} \in \{1,2,\ldots,p-1\}$ satisfying the following.
\begin{enumerate}
\item[(i)] If every $\mu_{j} \neq \infty$, then $\alpha_{1}=1$, $\alpha_{2}+\cdots+\alpha_{r} \equiv p-1 \mod(p)$;
\item[(ii)] If some $\lambda_{a}=\infty$, then  $\alpha_{1}+\cdots+\alpha_{a-1}+\alpha_{a+1}+\cdots +\alpha_{r} \equiv p-1 \mod(p)$.
\end{enumerate}  
\end{theo}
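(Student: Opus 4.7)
The plan is to apply the Kani-Rosen criterion (Theorem \ref{coroKR}) to the family $\mathcal{F}$ of index-$p$ subgroups $H_r < H$ (each $H_r \cong \mathbb{Z}_p^{n-1}$) whose associated quotient orbifold has genus at least one. Conditions (1) and (2) of that criterion are nearly immediate: every $H_r$ sits inside the abelian group $H$, so they pairwise commute; and if $H_{r_1} \neq H_{r_2}$ lie in $\mathcal{F}$, then their product strictly contains $H_{r_1}$ and so equals $H$, giving $C_\Lambda/(H_{r_1}H_{r_2}) = C_\Lambda/H \cong \widehat{\mathbb{C}}$ of genus zero.

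The substantive point is the genus-sum equality $\sum_{H_r \in \mathcal{F}} g_{S_{H_r}} = g_{C_\Lambda}$. The cleanest verification is via character theory. Writing $\hat{H}$ for the character group, decompose
\[
H^{1,0}(C_\Lambda) \;=\; \bigoplus_{\chi \in \hat{H}} V_\chi.
\]
The trivial isotypic component equals $H^{1,0}(C_\Lambda/H) = H^{1,0}(\widehat{\mathbb{C}}) = 0$. For any subgroup $K < H$ of index $p$, taking $K$-invariants yields
\[
H^{1,0}(C_\Lambda/K) \;=\; \bigoplus_{\chi\,:\,\ker\chi = K} V_\chi,
\]
since the $p-1$ nontrivial characters factoring through $H/K$ are precisely those with kernel exactly $K$. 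In particular, if $g_K = 0$ then $V_\chi = 0$ for every such $\chi$, so the index-$p$ subgroups outside $\mathcal{F}$ contribute nothing, and the sum over $\mathcal{F}$ recovers $\sum_{\chi \neq 1} \dim V_\chi = g_{C_\Lambda}$.

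It remains to identify each $S_{H_r}$ as a cyclic $p$-gonal curve of the stated form. Each $H_r$ is the kernel of a surjection $\eta : H \to \mathbb{Z}_p$, determined up to $\mathrm{Aut}(\mathbb{Z}_p)$. Composing $\pi$ with the quotient by $H_r$ presents $S_{H_r} \to \widehat{\mathbb{C}}$ as a cyclic $p$-cover with deck group $H/H_r \cong \mathbb{Z}_p$, ramified at the points $\mu_j \in \mathcal{B}_\Lambda$ for which $\alpha_j := \eta(a_j) \neq 0$, with local exponent $\alpha_j$. Since $a_1 \cdots a_{n+1} = 1$, one has $\sum_j \alpha_j \equiv 0 \pmod{p}$, and this presents $S_{H_r}$ by the cyclic-$p$-gonal equation $y^p = \prod_j (x-\mu_j)^{\alpha_j}$; the cases (i) and (ii) correspond to whether $\infty$ is a branch point, with (i) reflecting the normalization $\alpha_1 = 1$ under the $\mathrm{Aut}(\mathbb{Z}_p)$-action. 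A Riemann-Hurwitz calculation gives $g_{S_{H_r}} = (r-2)(p-1)/2$, where $r = \#\{j : \alpha_j \neq 0\}$, so $g_{S_{H_r}} \geq 1$ is equivalent to $r \geq 3$. The main obstacle is the genus-sum condition; the character-theoretic approach above bypasses the direct combinatorial identity (summing $(r-2)(p-1)/2$ against the binomial counts of surjections $H \to \mathbb{Z}_p$ with exactly $r$ nonzero coordinates) that one would otherwise need. Once the three Kani-Rosen conditions hold, the isogeny $JC_\Lambda \cong_{\mathrm{isog.}} \prod_{H_r \in \mathcal{F}} JS_{H_r}$ follows at once.
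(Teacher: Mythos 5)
Your argument is correct and follows exactly the route the paper indicates: Theorem \ref{(p,n)} is imported from \cite{CHQ} with the remark that it is a consequence of the Kani--Rosen criterion (Theorem \ref{coroKR}) applied to the index-$p$ subgroups of $H$, and your verification of conditions (1)--(3) --- in particular the isotypic-decomposition proof of the genus-sum identity, which cleanly replaces the direct combinatorial count --- is sound. The only quibble is the parenthetical claim that $g_{S_{H_r}}\geq 1$ is equivalent to $r\geq 3$: this uses $p\geq 3$ (for $p=2$ one needs $r\geq 4$), but since the result is applied in this paper only for $p\geq 3$ the point is harmless.
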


\subsection{Decomposition for the general case}
Let us now consider a pair $(S,G)$, where $S$ is a closed Riemann surface and $G \cong {\mathbb Z}_{p}^{l}$ of type $n+1 \geq 3$, where $p \geq 2$ is a prime integer such that $(p-1)(n-1)>2$. As already seen, there is some  $\Lambda=(\lambda_{1},\ldots,\lambda_{n-2}) \in \Omega_{n}$ and a subgroup $K \cong {\mathbb Z}_{p}^{n-l}$ of the generalized Fermat group $H$ of $C_{\Lambda}$, acting freely on it, such that there is a biholomorphism $\phi:S \to C_{\Lambda}/K$ conjugating $G$ to $H/K$.

There is a natural isogenety between $JC_{\Lambda}$ and $JS \times {\rm Prym}(C_{\Lambda}/S)$, where ${\rm Prym}(C_{\Lambda}/S)$ is the Prym variety associated to the covering $C_{\Lambda} \to S$ (this subvariety can be seen as the component of the identity of the ortogonal complement of the embedding of $JS$ into $JC_{\Lambda}$). On the other hand, Theorem \ref{(p,n)} asserts that $JC_{\Lambda}$ is isogenous to a product of certain jacobian varieties. One may conjecture that the above induces an isogenous decomposition

$$JS \cong_{isog.} \prod_{Z} JS_{Z},$$
where $Z$ runs over all subgroups of $G$ which are isomorphic to ${\mathbb Z}_{p}^{l-1}$, with quotient orbifold $S/Z$ of genus at least one (where $S_{Z}$ denotes its underlying Riemann surface), and that 
$${\rm Prym}(C_{\Lambda}/S) \cong_{isog.} \prod_{H_{r}} JS_{H_{r}},$$
where $H_{r}$ runs over all subgroups of $H$ which are isomorphic to ${\mathbb Z}_{p}^{n-1}$, with quotient orbifold $S/H_{r}$ of genus at least one, and such that $H_{r}$ does not contains $K$.

In the following, we prove the above assertions for the case $l=2$ and type $4$.

\begin{theo}\label{isogeno}
Let $S$ be a closed Riemann surface and ${\mathbb Z}_{p}^{2} \cong G<{\rm Aut}(S)$ of type $4$, where $p \geq 3$ is a prime integer. Let ${\mathcal C}$ be the collection of order $p$ cyclic subgroups $Z$ of $G$ such that $S/Z$ has genus bigger than zero. Then 
$$JS \cong_{isog} \prod_{Z \in {\mathcal C}} JS_{Z}.$$
\end{theo}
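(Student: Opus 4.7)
The plan is to apply Kani--Rosen (Theorem \ref{coroKR}) with the family $\{H_j\}$ taken to be exactly $\mathcal{C}$, the collection of order-$p$ cyclic subgroups $Z \leq G$ for which $g_{S/Z} \geq 1$. Condition (1) is free: $G \cong \mathbb{Z}_p^2$ is abelian, so any two of its subgroups commute. For condition (2), any two distinct order-$p$ subgroups $Z_1 \neq Z_2$ of $G \cong \mathbb{Z}_p^2$ satisfy $Z_1 Z_2 = G$, and by hypothesis $g_{S/G} = 0$. So the entire content of the proof is to check the genus identity in condition (3).

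For the genus computation, I would first apply Riemann--Hurwitz to the cover $S \to S/G$: with $|G|=p^2$, genus-$0$ quotient, and four cone points each of order $p$, one gets
\[ 2g_S - 2 = -2p^2 + 4(p^2 - p), \]
so $g_S = (p-1)^2$. Next, let $Z_1,\ldots,Z_4$ be the cyclic order-$p$ stabilizers at the four branch points, generated by nonzero $\alpha_1,\ldots,\alpha_4 \in G$. The Fuchsian relation gives $\alpha_1 + \alpha_2 + \alpha_3 + \alpha_4 = 0$, and connectedness of $S$ forces $\langle \alpha_1,\ldots,\alpha_4\rangle = G$. For each cyclic $Z \leq G$ of order $p$, set $n_Z = \#\{i : Z_i = Z\}$; the fixed points of $Z$ on $S$ are exactly the $p\, n_Z$ preimages of the branch points whose stabilizer is $Z$. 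A second application of Riemann--Hurwitz to $S \to S/Z$ then gives
\[ g_{S/Z} = \frac{(p-1)(2-n_Z)}{2}. \]
Summing over all $p+1$ order-$p$ cyclic subgroups (and noting $\sum_Z n_Z = 4$),
\[ \sum_Z g_{S/Z} = \frac{p-1}{2}\bigl(2(p+1) - 4\bigr) = (p-1)^2 = g_S. \]
Subgroups with $n_Z = 2$ contribute $0$ and so fall out of $\mathcal{C}$, leaving condition (3) satisfied on the nose. Kani--Rosen then delivers the asserted isogeny.

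The only subtle point -- the would-be obstacle -- is verifying $n_Z \leq 2$ for every $Z$, without which the formula for $g_{S/Z}$ would become negative and the accounting would fail. This follows from the observation that if three of the $\alpha_i$ lay in a common order-$p$ subgroup $Z$, the relation $\alpha_1+\alpha_2+\alpha_3+\alpha_4 = 0$ would force the fourth into $Z$ as well, contradicting $\langle \alpha_1,\ldots,\alpha_4\rangle = G \not\cong \mathbb{Z}_p$. So the non-cyclicity of $G$, together with the Fuchsian relation, is precisely what makes the Kani--Rosen bookkeeping balance.
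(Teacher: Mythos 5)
Your proposal is correct and follows essentially the same route as the paper: both apply the Kani--Rosen criterion (Theorem \ref{coroKR}) to the $p+1$ order-$p$ subgroups of $G$ and verify the genus identity by counting fixed points, your $n_Z$ bookkeeping being equivalent to the paper's count $\alpha+\beta+\gamma=p+1$, $4p=p\beta+2p\gamma$. The only difference is cosmetic: you argue abstractly from the branch-point stabilizers and the relation $\alpha_1+\alpha_2+\alpha_3+\alpha_4=0$ (which neatly handles the $n_Z\le 2$ point), whereas the paper first invokes Theorem \ref{teo1} to realize $S=C_\lambda/K$ and then identifies the three classes of subgroups explicitly in terms of $K$.
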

\begin{proof}
By Theorem \ref{teo1}, we may assume that $S=C_{\lambda}/K$, where $K \cong {\mathbb Z}_{p}$ is a subgroup of $H$ acting freely on $C_{\lambda}$, and 
$G=\langle A, B\rangle=H/K$.

The group $G$ has exactly $p+1$ different cyclic subgroups $Z \cong {\mathbb Z}_{p}$. Let $S_{Z}$ be the underlying Riemann surface of the quotient orbifold $S/Z$. As a consequence of the Riemann-Hurwitz formula), either one of the following possibilities occur.
\begin{enumerate}
\item $Z$ acts freely on $S$ and $S_{Z}$ has genus $p-1$. We denote by ${\mathcal A}$ the collection of those subgroups $Z$.
\item $Z$ has exactly $p$ fixed point on $S$ and $S_{Z}$ has genus $(p-1)/2$. We denote by ${\mathcal B}$ the collection of those subgroups $Z$.
\item $Z$ has exactly $2p$ fixed points and $S_{Z}$ is of genus zero. We denote by ${\mathcal C}$ the collection of those subgroups $Z$.
\end{enumerate}

If $K=\langle a_{2}a_{3}^{k}\rangle$, then $\langle B\rangle$ belongs to ${\mathcal C}$, $\langle A\rangle$ and $\langle A^{-1}B^{\hat{k}-1}\rangle$ belong to ${\mathcal B}$ and all other cyclic subgroups are in ${\mathcal A}$. In the case that $K=\langle a_{1}a_{2}^{r}a_{3}^{s}\rangle$, then $\langle A\rangle$, $\langle B\rangle$ and $\langle A^{\hat{s}-1}B^{\hat{s}r-1}\rangle$ belong to ${\mathcal B}$ and all other cyclic subgroups are in ${\mathcal A}$.

Let $\alpha, \beta, \gamma \geq 0$ be the cardinalities of ${\mathcal A}$, ${\mathcal B}$ and ${\mathcal C}$, respectively. So, $\alpha+\beta+\gamma=p+1$. As $S/G$ has exactly $4$ cone points, we observe that the total number of points on $S$ with non-trivial stabilizer is $4p$. It follows that $4p=p\beta+2p\gamma$. All the above asserts that $\alpha=\gamma+p-3$ and $\beta=4-2\gamma$. This permits to observe that $(p-1)^{2}=\alpha (p-1)+\beta(p-1)/2$. As $G$ is an abelian group, we may apply the Kani-Rosen theorem to the collection of those $\alpha+\beta$ cyclic subgroups $K$ as above to obtain that 
$$JS \cong_{isog} \prod_{Z \in {\mathcal A} \cup {\mathcal B}} JS_{Z}.$$

\end{proof}

\begin{rema}
In the above proof, 
if $Z \in {\mathcal A}$, then $S_{Z}$ admits $G/Z \cong {\mathbb Z}_{p}$ as a group of conformal automorphisms such that $S_{Z}/(G/Z)$ is $S/G$. It follows that $S_{Z}$ can be described by a cyclic $p$-gonal curve of the form $y^{p}=x(x-1)^{a_{Z}}(x-\lambda)^{b_{Z}}$, where $a_{Z},b_{Z} \in \{1,\ldots, p-1\}$ are such that $1+a_{Z}+b_{Z} \nequiv 0 \mod p$.
If $Z \in {\mathcal B}$, then $S_{Z}$ admits $G/Z \cong {\mathbb Z}_{p}$ as a group of conformal automorphisms such that $S_{Z}/(G/Z)$ is the Riemann sphere and its three cone points are contained inside the set $\{\infty,0,1,\lambda\}$. If $r_{Z},s_{Z},t_{Z}$ are these cone points, then 
$S_{Z}$ can be described by a cyclic $p$-gonal curve of the form $y^{p}=(x-r_{Z})(x-s_{Z})^{a_{Z}}(x-t_{Z})^{b_{Z}}$, where $a_{Z},b_{Z} \in \{1,\ldots, p-1\}$ are such that $1+a_{Z}+b_{Z} \equiv 0 \mod p$ (where if one of the cone points is $\infty$, the corresponding factor is deleted).

\end{rema}

\section{Declarations}

Funding: FONDECYT, ANID, Chile 

Conflicts of interest/Competing interests:   Not applicable

Availability of data and material: Not applicable

Code availability: Not applicable



\begin{thebibliography}{99}

\bibitem{AQ}
Artebani, M., Quispe, S. 
Fields of moduli and fields of definition of odd signature curves. 
{\it Arch. Math. (Basel)} {\bf 99} (4), 333--344 (2012)


\bibitem{BCI}
Bartolini, G., Costa, A. F. and Izquierdo, M. 
On the connectivity of branch loci of moduli spaces. 
{\it Annales Academiae Scientiarum Fennicae} {\bf 38} (1),  245--258 (2013)

\bibitem{BCI2}
Bartolini, G., Costa, A. F. and Izquierdo, M. 
On the orbifold structure of the moduli space of Riemann surfaces of genera four and five. 
{\it Rev. R. Acad. Cienc. Exactas Fs. Nat. Ser. A Mat. (RACSAM)} {\bf 108} (2) (2014), 769--793.


\bibitem{BI}
Bartolini, G. and Izquierdo, M. 
On the connectedness of the branch locus of the moduli space of Riemann surfaces of low genus. 
{\it Proc. Amer. Math. Soc.} {\bf 140} (1), 35--45 (2012)

\bibitem{BL}
Ch. Birkenhake and H. Lange.
{\it Complex Abelian Varieties}, 2nd edition, Grundl. Math. Wiss. {\bf 302},
Springer, 2004.



\bibitem{CHQ}
Carvacho, M., Hidalgo, R. A., Quispe, S.
Jacobian variety of generalized Fermat curves.
{\it The Quarterly J. of Math.} {\bf 67}, 261--284 (2016)


\bibitem{Severi}
G. Castelnuovo.
Sulle serie algebriche di gruppi di punti appartenenti ad una curve algebraica.
{\it Rendiconti della R. Academia dei Lincei Series} {\bf 5, XV},  
Memorie scelte p. 509 (1906)

\bibitem{CI}
Costa, A. F. and Izquierdo, M. 
One-dimensional families of Riemann surfaces of genus $g$ with $4g+4$ automorphims. 
{\it Rev. R. Acad. Cienc. Exactas Fs. Nat. Ser. A Mat. (RACSAM)} {\bf 112} (3) (2018), 623--631.

\bibitem{FGHL}
Fuertes, Y., Gonz\'alez-Diez, G., Hidalgo, R. A., Leyton. M.
Automorphisms groups of generalized Fermat curves of type $(k,3)$.
{\it J. of Pure and Applied Algebra} {\bf 217}, 1791--1806 (2013)

\bibitem{Gabino}
Gonz\'alez-Diez, G.  
On prime Galois coverings of the Riemann Sphere.
 {\it Annali di Matematica pura ed applicata (IV)}, Vol. {\bf CLXVII}, 1--15 (1995)



\bibitem{GHL} 
Gonz\'alez-Diez, G., Hidalgo, R. A., Leyton. M.
Generalized Fermat curves. 
{\it Journal of Algebra} {\bf 321}, 1643--1660 (2009)


\bibitem{Greenberg}
Greenberg, L.
Conformal Transformations of Riemann Surfaces.
{\it Amer. J. of Math.} {\bf 82} (2), 749--760 (1960)


\bibitem{H1}
Hidalgo, R. A.
$p$-Groups on Riemann surfaces.
 {\it Journal of Pure and Applied Agebra} {\bf 222}, 4173--4188 (2018)

\bibitem{HL}
Hidalgo, R. A., Leyton. M.
On uniqueness of automorphisms groups of Riemann surfaces. 
{\it Revista Matem\'atica Iberoamericana} {\bf 23} (3), 793--810 (2007)



\bibitem{HKLP} 
Hidalgo, R. A., Kontogeorgis, A., Leyton, M., Paramantzouglou, P.
Automorphisms of generalized Fermat curves. 
{\it Journal of Pure and Applied Agebra} {\bf 221}, 2312--2337 (2017)


\bibitem{IJR}
Izquierdo, M., Jones G. A., Reyes-Carocca, S.
Groups of automorphisms of Riemann surfaces and maps of genus $p+1$ where $p$ is prime. 
{\it Ann. Fenn. Math.} {\bf 46} (2) (2021), 839--867.

\bibitem{Hurwitz}
Hurwitz, A.
\"Uber algebraische Gebilde mit eindeutigen Transformationen in sich.
{\it Math. Ann.} {\bf 41}, 403--442 (1893)



\bibitem{K-R}
E. Kani and M. Rosen.
Idempotent relations and factors of Jacobians.
{\it Math. Ann.} {\bf 284} (2), 307--327 (1989)


\bibitem{Koizumi}
Koizumi, S.
The fields of moduli for polarized abelian varieties and for curves.
{\it Nagoya Math. J.} {\bf  48}, 3--55 (1972)

\bibitem{LR}
H. Lange and S. Recillas.
Abelian varieties with group action. 
{\it J. reine angew. Math.} {\bf 575} (2004), 135--155.


\bibitem{Nag}
Nag, S.
{\it The complex analytic theory of Teichm\"uller spaces.}
A Wiley-Interscience Publication. John Wiley \& Sons, Inc., New York 1988.





\bibitem{Poincare}
H. Poincar\'e. 
Sur les fonctions ab\'eliennes.
{\it Amer. J. Math.} {\bf 8} (1886), 289--342. 


\bibitem{RR}
Reyes-Carocca, S., Rojas, Anita M.
On large prime actions on Riemann surfaces.
{\it Journal of Group Theory} {\bf 25} (2022), 887--940.


\bibitem{Schwarz}
Schwartz, H. A.:
\"Uber diejenigen algebraischen Gleichungen zwischen zwei ver\"anderlichen Gr\"o{\ss}en, welche eine schaar rationaler, eindeutig umkehrbarer 
Transformationen in sich selbst zulassen.
{\it Journal f\"ur die reine und angewandte Mathematik} {\bf 87}, 139--145 (1890) 


\bibitem{Singerman}
Singerman, D.
Finitely maximal Fuchsian groups.
{\it J. London Math. Soc.} {\bf} (2) (1972), 29--38.





\bibitem{Wolfart}
Wolfart, J. 
ABC for polynomials, dessins d'enfants and uniformization -- a survey. 
Elementare und analytische Zahlentheorie, 313Ð345, Schr. Wiss. Ges. Johann Wolfgang Goethe Univ. Frankfurt am Main, 20, Franz Steiner Verlag Stuttgart, Stuttgart, 2006.

\bibitem{Wootton}
Wootton, A. 
Defining equations for cyclic prime covers of the Riemann sphere. 
{\it Israel J. Math.} {\bf 157}, 103--122 (2007)

\end{thebibliography}
\end{document}